\renewcommand{\phi}{\varphi}
\newcommand{\al}{\alpha}
\newcommand{\co}{\mathbb{C}}
\newcommand{\N}{\mathbb{N}}
\newcommand{\Z}{\mathbb{Z}}
\newcommand{\R}{\mathbb{R}}
\newcommand{\rea}{{\rm Re}\,}
\newcommand{\ima}{{\rm Im}\,}
\newtheorem{Thm}{Theorem}[section]
\newtheorem{theorem}[Thm]{Theorem}
\newtheorem{lemma}[Thm]{Lemma}
\newtheorem{proposition}[Thm]{Proposition}
\newtheorem{corollary}[Thm]{Corollary}
\newtheorem{remark}[Thm]{Remark}
\DeclareMathOperator{\diag}{diag}
\begin{document}  
\sloppy

\title[Systems biorthogonal to exponential systems]
{Systems biorthogonal to exponential systems \\ on a finite union of intervals}
\author{Anton Baranov, Yurii Belov, Alexander Kuznetsov}

\address{
 Anton Baranov
\newline Department of Mathematics and Mechanics, St.~Petersburg State University,
St.~Petersburg, Russia
\newline {\tt anton.d.baranov@gmail.com}
\smallskip
\newline \phantom{x}\,\, Yurii Belov
\newline  Department of Mathematics and Computer Science, St.~Petersburg State
University, St. Petersburg, Russia
\newline {\tt j\_b\_juri\_belov@mail.ru}
\smallskip
\newline \phantom{x}\,\, Alexander Kuznetsov
\newline  Department of Mathematics and Computer Science, St.~Petersburg State
University, St. Petersburg, Russia
\newline {\tt alkuzn1998@gmail.com}
}

\thanks{The results of Sections 2 and 3 were obtained with the support of Russian Science Foundation grant 19-71-30002. The results of Sections 4 and 5  were obtained with the support of Ministry of Science and Higher Education of the Russian Federation, agreement No 075-15-2021-602.}

\begin{abstract}
We study the properties of a system biorthogonal to a complete and minimal system of exponentials in $L^2(E)$,
where $E$ is a finite union of intervals, and show that 
in the case when $E$ is a union of two or three intervals the biorthogonal system is also complete. 
\end{abstract}

\maketitle

\section{Introduction}

Geometric properties of exponential systems on an interval are among the major themes of 20th century harmonic analysis. 
This theory emerged in the classical works of R.~Paley and N.~Wiener, N.~Levinson, 
R.J.~Duffin and A.C.~Schaeffer;
it includes such milestones as the work of A.~Beurling and P.~Malliavin on the radius of completeness \cite{bm}, 
solution of the exponential Riesz bases problem (which goes back to Paley and Wiener) by B.S.~Pavlov, 
S.V.~Khruschev and N.K.~Nikolski (see \cite{pav, nik, min}), or more recent description
of exponential frames by K.~Seip and J.~Ortega-Cerd\`a \cite{os}.
Main tools for the study of exponential systems were provided by the entire function theory: an application 
of the Fourier transform allowed to work with equivalent problems (like uniqueness, sampling or interpolation)
in the Paley--Wiener space of bandlimited functions. 

More recently, the study of exponential systems on more general disconnected sets became 
a field of intensive research. In this case many of the above-mentioned problems become much more complicated; 
not only the description but mere the existence of exponential Riesz bases with real frequencies 
on a union of three or more intervals was an open problem which was only recently solved by 
G.~Kozma and S. Nitzan \cite{kn}. Even more recent is a construction of a bounded set which does not admit an exponential Riesz basis by G.~Kozma, S.~Nitzan and
A.~Olevskii \cite{kno}.
We refer to the monograph \cite{ol} by A.~Olevskii and A.~Ulanovskii for many recent advances in the field. 
One of the difficulties compared to the case of one interval is that the associated spaces of entire functions
become much more involved. Therefore, most of the proofs are based on real analysis methods. 

\subsection{}
In the present paper we are interested in the following problem. Let $E\subset \R$ be a bounded set and 
assume that a system of exponentials $\{e_\lambda\}_{\lambda\in \Lambda}$, where $e_\lambda(t) = e^{i\lambda t}$
and $\Lambda \subset \co$,
is complete and minimal in $L^2(E)$. Then there exists a unique biorthogonal system, i.e., 
a system  $\{g_\lambda\}_{\lambda\in\Lambda} \subset L^2(E)$ such that
$$
(e_\lambda, g_\mu)_{L^2(E)} =
\begin{cases} 1, & \lambda = \mu, \\
0, & \lambda \ne \mu.
\end{cases}
$$
Is it true that the  biorthogonal system $\{g_\lambda\}_{\lambda\in\Lambda}$ is also complete? This is a natural question since completeness of the biorthogonal system means that there is a one-to-one correspondence
between functions $f\in L^2(E)$  and their generalized (non-harmonic) Fourier series 
$\sum_{\lambda\in \Lambda} (f, g_\lambda) e_\lambda$. 

It is a result due to R.M.~Young \cite{young}
that a system biorthogonal to a complete and minimal system of exponentials in $L^2(I)$,
where $I$ is an interval, is complete. In \cite{belov2015} the second author showed that
a complete and minimal system of time-frequency shifts of the Gaussian (a Gabor system) in $L^2(\R)$ 
always has a complete biorthogonal system. This result is obtained by passing, via the Bargmann transform, to an equivalent
problem for systems of reproducing kernels in the Bargmann--Fock space. 

Completeness of systems biorthogonal to systems of reproducing kernels in different spaces of analytic functions was studied in 
\cite{fric, bb2011, bbb2018}. In particular, in \cite{bbb2018} the completeness of the biorthogonal system was proved 
for a wide class of weighted Fock-type spaces under very mild regularity conditions on the weight. 
In \cite{bb2011} completeness of the biorthogonal system was studied
in de Branges spaces and model subspaces of the Hardy space. In this paper 
the very first examples were produced of a space of analytic functions where 
a system biorthogonal to  a system of reproducing kernels can be incomplete and even can have an arbitrary finite or infinite
defect.

\subsection{}
The main result of the present paper says that in the case when $E$ is a union of two or three intervals 
Young's theorem remains true: 
a system biorthogonal to a complete and minimal system of exponentials in $L^2(E)$ is complete.

Let $\{e_\lambda\}_{\lambda\in\Lambda}$ be complete and minimal in $L^2(E)$. In the case when
$E$ is an interval it is well known that $\Lambda$ must satisfy
\begin{equation}
\label{dens}
\mathcal{D}_+(\Lambda) =\limsup_{R\rightarrow\infty}\frac{\# (\Lambda\cap B(0,R))}{2R}=\frac{|E|}{2\pi}.
\end{equation}
Here and in what follows $|E|$ will denote the Lebesgue measure of $E$ and $B(0,R)$ is a disk of radius $R$ centered at $0$.
For an arbitrary $E$ there are well known Landau estimates of the uniform densities for the case
when the system $\{e_\lambda\}_{\lambda\in\Lambda}$ is a frame or a Riesz sequence in $L^2(E)$.
Moreover, A. Olevskii and A. Ulanovskii \cite{ou1} showed  that in the case when $E$ is an arbitrary compact set, 
$\Lambda\subset \R$
and $\{e_\lambda\}_{\lambda\in\Lambda}$ is complete and uniformly minimal in $L^2(E)$, one has 
$$
\mathcal{D}^u_+(\Lambda) =\limsup_{R\rightarrow\infty}\frac{ \sup_{x\in \R}  \# (\Lambda\cap (x,x+R))}{R}\le\frac{|E|}{2\pi},
$$
where $\mathcal{D}^u_+(\Lambda)$ is the so-called Beurling upper uniform density.

We show (see Corollary \ref{upp}) that when $E$ is a finite union of intervals 
and the system $\{e_\lambda\}_{\lambda\in\Lambda}$ is complete and minimal,
one  has
$\mathcal{D}_+(\Lambda) \le \frac{|E|}{2\pi}$. We do not know whether the lower estimate in 
\eqref{dens} always holds and we impose it as an
additional condition. Then our main theorem reads as follows.

\begin{theorem}
\label{main}
Let $E$ be a union of two or three intervals, let $\Lambda\subset \co$ 
and let $\{e_\lambda\}_{\lambda\in\Lambda}$ be a complete and minimal system in $L^2(E)$ satisfying
\eqref{dens}. Then the system biorthogonal to $\{e_\lambda\}_{\lambda\in\Lambda}$ is also complete.
\end{theorem}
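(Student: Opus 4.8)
The plan is to transfer the problem, via the Fourier transform, to a question about reproducing kernels in a space of entire functions, and then to exploit the explicit structure of the generating function of the exponential system. After a translation we may assume $E\subset[0,A]$ with $0,A\in\overline E$, and let $\mathcal F$ be the transform $f\mapsto\widehat f(z)=\int_E f(t)e^{-izt}\,dt$. Then $\mathcal H_E:=\mathcal F(L^2(E))$ is a reproducing kernel Hilbert space of entire functions whose scalar product is inherited from $L^2(\R)$, and $\mathcal F(e_\lambda)$ is a multiple of the reproducing kernel of $\mathcal H_E$ at the point $\bar\lambda$. Completeness of $\{e_\lambda\}$ in $L^2(E)$ amounts to $\overline\Lambda$ being a uniqueness set for $\mathcal H_E$; minimality produces a \emph{generating function} $G$, an entire function with simple zeros exactly at $\overline\Lambda$ satisfying $G\notin\mathcal H_E$ but $G(z)/(z-\bar\lambda)\in\mathcal H_E$ for every $\lambda\in\Lambda$, and the element biorthogonal to $e_\lambda$ is a multiple of $G(z)/(z-\bar\lambda)$. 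A computation with distributional transforms shows that the inverse transform $\check G$ is a measure on $\overline E$ equal to an $L^2(E)$ function plus point masses located exactly at the $2n$ endpoints of $E$, and that $G\notin\mathcal H_E$ precisely because at least one of these masses is nonzero.

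Suppose the biorthogonal system is not complete and pick $h\in\mathcal H_E\setminus\{0\}$ orthogonal to $\mathcal F(g_\lambda)$ for all $\lambda$. Unwinding this and setting $\Phi(z)=G(z)\,\overline{h(\bar z)}$, one obtains $\int_\R \Phi(x)/(x-\bar\lambda)\,dx=0$ for all $\lambda$, while $\Phi$ vanishes on $\overline\Lambda$ because $G$ does. Since $\check\Phi=\check G\ast\overline{\check h(-\,\cdot\,)}$ is a compactly supported $L^2$ function with support in $\overline E-\overline E\subset[-A,A]$, the function $\Phi$ lies in $L^2(\R)$ and is entire of exponential type at most $A$. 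Splitting $\Phi=\Phi_++\Phi_-$ by the analytic (Riesz) projections, $\check\Phi_\pm=\check\Phi\cdot\mathbf 1_{\R_\pm}$, a Plemelj/Cauchy-transform argument converts the two hypotheses (the vanishing of $\Phi$ on $\overline\Lambda$ and the vanishing of the Cauchy integrals) into the stronger statement that \emph{each} of $\Phi_+$ and $\Phi_-$ vanishes on all of $\overline\Lambda$. Up to this point everything is soft and valid for an arbitrary finite union of intervals.

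The crux is to show that the Fourier transforms of $\Phi_+$ and $\Phi_-$ are in fact supported in $E$ and $-E$, respectively, rather than merely in $(\overline E-\overline E)\cap\R_\pm$. Granting this, $\Phi_+\in\mathcal H_E$ vanishes on the uniqueness set $\overline\Lambda$, hence $\Phi_+\equiv0$; the conjugate-linear isometry $f(t)\mapsto\overline{f(-t)}$ maps the problem for $(E,\Lambda)$ onto the one for $(-E,\overline\Lambda)$ and interchanges the two Riesz projections, so the identical argument gives $\Phi_-\equiv0$; therefore $\Phi\equiv0$, forcing $\overline{h(\bar z)}\equiv0$ and $h=0$, a contradiction. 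The support statement is exactly where the restriction to two or three intervals enters: using the precise form of $\check G$ — in particular which endpoint masses may be nonzero, a count controlled by the density normalization \eqref{dens} and the upper bound of Corollary \ref{upp} — one must track how the pieces of $\check G\ast\overline{\check h(-\,\cdot\,)}$ that overlap the forbidden set $(\overline E-\overline E)\setminus\big(E\cup(-E)\big)$ are forced to cancel against one another; the combinatorics of $\overline E-\overline E$ relative to $E$ is still manageable for two or three intervals but becomes unwieldy (and the conclusion is possibly false) for four or more.

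I expect the main obstacle to be precisely this last step — proving that $\Phi_+$ and $\Phi_-$ have the expected Fourier supports. All the preceding reductions are routine once the entire-function dictionary is in place, whereas localizing the supports requires a careful, case-dependent analysis of the endpoint masses of the generating function together with the geometry of $E$, and it is this analysis that forces the hypothesis that $E$ be a union of at most three intervals.
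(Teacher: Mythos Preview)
Your approach has a genuine gap at its very foundation. You assert that minimality produces a single generating function $G$ with simple zeros exactly at $\overline\Lambda$ such that $G(z)/(z-\bar\lambda)\in\mathcal H_E$ for every $\lambda$, and that the biorthogonal element to $e_\lambda$ is a scalar multiple of $G(z)/(z-\bar\lambda)$. This is the standard picture for a \emph{division-invariant} space (in particular for $PW_I$ with $I$ a single interval), but it is \emph{false} when $E$ is disconnected: $PW_E$ is not division-invariant, and the biorthogonal system does not arise from a single generator in this way. Concretely, if $f_\lambda$ is the biorthogonal element in $PW_E$, then the functions $F_\lambda(z)=(z-\lambda)f_\lambda(z)$ all vanish on $\Lambda$, but for distinct $\lambda$ they are in general \emph{linearly independent} (Lemma~\ref{indep}); there is no common $G$. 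The function $S$ in Corollary~\ref{fs} and Proposition~\ref{struct} does have zero set exactly $\Lambda$, but its spectrum lies in the \emph{interval} $I^1+\cdots+I^N$, not in $E$, so $S/(z-\lambda)$ lands in the wrong space. The correct description of $f_\lambda$ is the determinantal formula of Propositions~\ref{bior2} and~\ref{struct}, built from $N$ independent generators $F_1,\dots,F_N$ and their spectral pieces $F_k^j$; this structure is what makes the problem nontrivial and is precisely what the paper spends Section~\ref{bio} establishing.

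Because the formula for $f_\lambda$ is wrong, the function you call $\Phi$ does not carry the information you need, and the subsequent Riesz-projection argument has no footing. Even setting this aside, the step you flag as the crux --- showing that $\widehat{\Phi_\pm}$ are supported in $\pm E$ rather than in $(\overline E-\overline E)\cap\R_\pm$ --- is not argued at all; ``the combinatorics is manageable for two or three intervals'' is an expectation, not a proof, and your sketch of how the endpoint masses of $\check G$ should force cancellation is purely heuristic. The paper's actual mechanism is quite different: it expands the hypothetical $h$ in a carefully constructed Riesz basis $\{k^E_\gamma\}_{\gamma\in\Gamma}$ with $\Gamma\subset\Z$ (Proposition~\ref{basis}, using the Kozma--Nitzan lemma for three intervals), uses the determinantal representation of $f_\lambda$ to produce several auxiliary Paley--Wiener functions $V^{ij}$ with spectra in $I^i+I^j$ that all agree on $\Gamma$, and then exploits the $2\pi$-periodicity of $e^{i\gamma t}$ for $\gamma\in\Z$ to force these functions to vanish. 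None of this is visible in your outline.
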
 

It seems to be an interesting (and, apparently, complicated) problem whether
 a complete and minimal system of exponentials in $L^2(E)$ always must have the maximal density $|E|/(2\pi)$.
It is known that in the setting of Gabor systems of Gaussians the corresponding upper density (measured with respect to
the area Lebesgue measure) of a complete and minimal Gabor system can change in the range from $\pi^{-1}$ to $e$
\cite{bbk}. 
Also, it was noted already by H. Landau \cite{lan} (see, also, \cite[Section 6.2]{ol}) that for a finite 
union of intervals the density of a complete system (but without the minimality condition) can 
be arbitrarily small when compared with $|E|$.
\medskip
\\
{\bf Conjecture.} {\it Let $E$ be a finite union of intervals and let 
$\{e_\lambda\}_{\lambda\in\Lambda}$ be complete and minimal in $L^2(E)$.
Then $\Lambda$ satisfies \eqref{dens}. }
\medskip 

In contrast to most of results on disconnected sets, the methods of the proof of Theorem \ref{main}
are complex analytic. For this we will need to obtain a formula for a system biorthogonal to 
a system of reproducing kernels in $PW_E$, the Paley--Wiener space on disconected spectrum. Also, in the case of three intervals we 
will essentially use a construction of Riesz basis of exponentials in $L^2(E)$ with some additional properties
which is based on the methods of the paper \cite{kn} by G.~Kozma and S.~Nitzan. However, for the moment our results
do not extend to a larger number of intervals and the following remains an open problem.
\medskip
\\
{\bf Problem.} {\it Let $E$ be a union of at least four disjoint intervals \textup(or simply a bounded measurable set\textup)
and let 
$\{e_\lambda\}_{\lambda\in\Lambda}$ be complete and minimal in $L^2(E)$.
Is it true that its biorthogonal system is complete? }
\medskip 

\subsection{Notation and organization of the paper} Throughout the paper we write
$A\asymp B$ if $C_1A\leq B\leq C_2 B$ for some positive constants $C_1$ and $C_2$
and for all admissible values of the parameters.
 
The paper is organized as follows. In Section \ref{prelim} we prove some preliminary results and, in particular, give a construction
of a Riesz basis on three intervals with some additional properties. In Section \ref{bio} formulas
for a system biorthogonal to a system of reproducing kernels of the Paley--Wiener space
on a disjoint spectrum are obtained. Theorem \ref{main} for the case of two intervals
is proved in Section \ref{case2}; this proof is more elementary and, in particular, does not use the results of \cite{kn}.
Finally, in Section \ref{case3} we prove Theorem \ref{main} for three intervals.

%%%%%%%%%%%%%%%%%%%%%%%%%%%%%%%%%%%%%%%%%%%%%%%%%

\section{Preliminaries}
\label{prelim}

As usual we will pass via Fourier transform to an equivalent problem in the associated space of entire functions. 
Given a bounded set $E\subset \R$, denote by $PW_E$ the space  of all entire functions $f$
representable as 
$$
f(z) = ({\mathcal F}\phi)(z) = \int_E \phi(t) e^{itz} dt,
$$
where $\phi\in L^2(E)$. Clearly, $PW_E \subset L^2(\R)$ and $\|f\|_2^2 = 2\pi \|\phi\|_2^2$. 

Denote by $k^E_\lambda$ the reproducing kernel of $PW_E$
at the point $\lambda$. It is clear that 
$$
k^E_\lambda (z) = \frac{1}{2\pi}{\mathcal F} (e^{-it \bar \lambda})(z) =
\frac{1}{2\pi} \int_E e^{it(z-\bar \lambda)} dt.
$$ 
Thus, a system of exponentials in $L^2(E)$ is mapped to a system of reproducing kernels in $PW_E$, and 
we come to an equivalent problem: {\it given a complete and minimal system of reproducing kernels in 
$PW_E$, is it true that its biorthogonal is also complete? }

One of the key difficulties while working with $PW_E$ is that this space is not division-invariant unless $E$ 
is an interval, i.e., for  $f\in PW_E$ the equality $f(\lambda) =0 $ does not imply that 
$\frac{f(z)}{z-\lambda} \in PW_E$.
It is well known that in a  division-invariant Hilbert space of analytic functions a system biorthogonal to a 
complete and minimal system of reproducing kernels is given by $\frac{G(z)}{G'(\lambda)(z-\lambda)}$ where
$G$ is some fixed {\it generating} function with zeros at $\Lambda$. This is no longer true in 
$PW_E$.

The following lemma gives a representation of
$\frac{f(z)}{z-\lambda}$ as a sum of a function from $PW_E$ and 
of a linear combination of reproducing kernels in the complementing intervals.  

\begin{lemma} 
\label{div}
Let $E=\cup_{j=1}^n I^j$, where $I^j =[a_j, b_j]$ are disjoint intervals 
with $b_j <a_{j+1}$, and let $L^j = [b_j, a_{j+1}]$ be the complementing intervals. 
If $f\in PW_E$ and $f(\lambda)=0$, then 
$$
\frac{f(z)}{z-\lambda}=\tilde{f}(z)+ \sum_{j=1}^{n-1} c_j k^{L^j}_{\bar \lambda} (z)
$$
where $\tilde{f}$ is some function from $ PW_E$ and  
$c_j = - 2\pi i F^j (\lambda)$, where $F^j$ is the projection of $f$ onto 
$PW_{\cup_{k=1}^j I^j}$.
\end{lemma}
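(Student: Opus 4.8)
The plan is to compute $\frac{f(z)}{z-\lambda}$ directly on the Fourier side. Write $f = \mathcal{F}\phi$ with $\phi \in L^2(E)$, so $f(z) = \int_E \phi(t) e^{itz}\,dt$, and the hypothesis $f(\lambda) = 0$ reads $\int_E \phi(t) e^{it\lambda}\,dt = 0$. The natural move is to produce a function $\psi$ (supported, a priori, on all of $\mathbb{R}$) with $\mathcal{F}\psi(z) = \frac{f(z)}{z-\lambda}$; the classical one-interval computation suggests taking an antiderivative of $\phi(t)e^{it\lambda}$. Precisely, for the interval case one uses that $\frac{f(z)}{z-\lambda} = i\,e^{i\lambda z}\int \left(\int_a^t \phi(s)e^{i\lambda s}\,ds\right) e^{i(z-\lambda)t}\,dt$ after an integration by parts, the boundary terms vanishing because $f(\lambda)=0$. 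Here the same integration by parts on each component interval $I^j = [a_j,b_j]$ will produce, besides a genuine $PW_E$ term, boundary contributions at the endpoints $b_j$ and $a_{j+1}$ that do not cancel, because the primitive $\Phi(t) := \int_{a_1}^t \phi(s)e^{i\lambda s}\,ds$ (extended by the convention that it is constant on the gaps $L^j$) jumps only in the sense that it is nonzero on $L^j$.

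First I would set $\Phi(t) = \int_{a_1}^{t} \phi(s) e^{i\lambda s}\,ds$ for $t \in E$, and observe that on the gap $L^j = [b_j, a_{j+1}]$ the natural extension of $\Phi$ is the \emph{constant} $\Phi(b_j) = \int_{\cup_{k\le j} I^k} \phi(s)e^{i\lambda s}\,ds = 2\pi i\, e^{i\lambda\text{-ish}}\cdot$(value of the projected transform); more precisely this constant equals $2\pi F^j(\lambda) \cdot (\text{phase})$ where $F^j$ is the projection of $f$ onto $PW_{\cup_{k\le j}I^k}$, since $F^j = \mathcal{F}(\phi \mathbf{1}_{\cup_{k\le j}I^k})$ and $F^j(\lambda) = \int_{\cup_{k\le j}I^k}\phi(s)e^{i\lambda s}\,ds$. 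The hypothesis $f(\lambda)=0$ says exactly that this constant vanishes for $j = n$, which is why the very last boundary term disappears. Then I would write $\frac{f(z)}{z-\lambda}$ as the Fourier transform of the function equal to $-i e^{-i\lambda(\cdot)}\Phi'$... — rather, unwind the integration by parts to get $\frac{f(z)}{z-\lambda} = \mathcal{F}\big(i\,\Psi\big)(z)$ where $\Psi$ is $e^{i\lambda t}$ times a primitive, with $\Psi$ supported on $E \cup (\text{gaps})$; the part of $\Psi$ supported on $E$ gives $\tilde f \in PW_E$, and the part supported on each gap $L^j$ is a constant multiple of $e^{i\lambda t}\mathbf{1}_{L^j}(t)$, whose Fourier transform is (a constant times) $k^{L^j}_{\bar\lambda}(z)$ by the formula for reproducing kernels of $PW$ on an interval recalled before the lemma.

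The bookkeeping that pins down the constant $c_j$ is the main obstacle: one must carry the phase factors through the integration by parts carefully and match $\mathcal{F}(e^{i\lambda t}\mathbf{1}_{L^j})$ against the normalization $k^{L^j}_{\bar\lambda}(z) = \frac{1}{2\pi}\int_{L^j} e^{it(z-\lambda)}\,dt$ to extract exactly $c_j = -2\pi i\, F^j(\lambda)$. The one genuine subtlety is checking that $\tilde f$ really lies in $PW_E$ and not merely in $L^2$ of a larger set: after subtracting the gap contributions, the remaining primitive-type function is supported on $E$ and square-integrable there, so its Fourier transform lies in $PW_E$ — but one should note that $\frac{f(z)}{z-\lambda}$ is entire (the zero at $\lambda$ removes the pole) and in $L^2(\mathbb{R})$, and that $k^{L^j}_{\bar\lambda}$ is entire, so $\tilde f$ is automatically entire and $L^2$; identifying it as $\mathcal{F}$ of an $L^2(E)$ function then follows by construction. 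I would close by double-checking the formula on $E$ being a single interval, where all $L^j$ are absent and the statement reduces to the classical division identity, as a sanity check on the phases and signs.
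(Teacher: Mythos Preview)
Your approach is correct and essentially identical to the paper's: both take the primitive $\Phi(t)=\int_{a_1}^{t}\phi(s)e^{i\lambda s}\,ds$ (with $\phi$ extended by zero on the gaps), integrate by parts using $\Phi(a_1)=\Phi(b_n)=0$, and split $\frac{f(z)}{z-\lambda}=-i\int_{a_1}^{b_n}\Phi(t)e^{it(z-\lambda)}\,dt$ into the $E$-part (giving $\tilde f\in PW_E$) and the gap parts, where $\Phi\equiv\Phi(b_j)=F^j(\lambda)$ on $L^j$ produces the $k^{L^j}_{\bar\lambda}$ terms. Your phase worries are warranted --- in a couple of your intermediate expressions the exponent should be $e^{-i\lambda t}$ rather than $e^{i\lambda t}$ (so the gap contribution is a constant times $e^{-i\lambda t}\mathbf{1}_{L^j}$, whose Fourier transform is $2\pi k^{L^j}_{\bar\lambda}$) --- but these are exactly the bookkeeping details you already flag, and correcting them yields $c_j=-2\pi i\,F^j(\lambda)$ on the nose.
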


\begin{proof} Without loss of generality we can assume that $a_1= 0$ and $b_n = 2\pi$. We have 
$$
f(z)=\int_E\varphi(t)e^{itz}dt,\quad \varphi\in L^2(E), \qquad f(\lambda)=\int_E \varphi(t)e^{i t\lambda}dt=0.
$$
We consider $\varphi$ as an element of $L^2(0, 2\pi)$, $\varphi \equiv 0$ on $(0,2\pi) \setminus E$. 
Let $\Phi(x) = \int_0^x \varphi(t)e^{i t\lambda } dt$ be the primitive of $\varphi(t)e^{i t \lambda}$ such that $\Phi(0)=0$.
Hence, $\Phi(2\pi)=\int_{0}^{2\pi}\varphi(t)e^{i t\lambda}dt=0$. Integrating by parts, we get
$$
f(z)=\int_{-\pi}^\pi e^{it(z-\lambda)}d\Phi(t)= -i(z-\lambda)\int_{-\pi}^\pi e^{it(z-\lambda)}\Phi(t)dt.
$$
Note that $\Phi$ is a constant on each of the intervals $L^j$. So,
$$
\frac{f(z)}{z-\lambda}=- i\int_E\Phi(t)e^{it(z-\lambda)}dt -i
\sum_{j=1}^{n-1} \Phi(b_j) \int_{L^j} e^{i t(z-\lambda)}dt=\tilde{f}(z)  - 2\pi i \Phi(b_j) k^{L^j}_{\bar \lambda} (z),
$$
where $\tilde{f}\in PW_E$.

Finally, note that  $\Phi (b_j) = \int_0^{b_j} \varphi(t)e^{i t\lambda } dt = F^j (\lambda)$,
where $F^j(z) = \int_{\cup_{k=1}^j I_k}  \varphi(t)e^{i tz}dt$ is the projection of $f$ onto 
$PW_{\cup_{k=1}^j I^j}$.
\end{proof}

\begin{corollary}
\label{div1}
Let $E=\cup_{j=1}^n I^j$, where $I^j$ are disjoint intervals and let
$f\in PW_E$, $f = \sum_{j=1}^n f^j$, $f^j \in PW_{I^j}$.
Assume that $f(\lambda) =0$. Then $\frac{f(z)}{z-\lambda} \in PW_E$ if and only if
$f^j(\lambda) = 0$ for any $j$.
\end{corollary}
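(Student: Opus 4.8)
The plan is to deduce this from Lemma \ref{div}, using the decomposition $f = \sum_{j=1}^n f^j$ with $f^j \in PW_{I^j}$. First I would apply Lemma \ref{div} to $f$: since $f(\lambda) = 0$, we get
$$
\frac{f(z)}{z-\lambda} = \tilde f(z) + \sum_{j=1}^{n-1} c_j\, k^{L^j}_{\bar\lambda}(z),
\qquad \tilde f \in PW_E, \quad c_j = -2\pi i\, F^j(\lambda),
$$
where $F^j = f^1 + \dots + f^j$ is the projection of $f$ onto $PW_{\cup_{k=1}^j I^k}$. The key structural observation is that $PW_E = \bigoplus_{j=1}^n PW_{I^j}$ is an \emph{orthogonal} direct sum (the supports $I^j$ are disjoint, so the corresponding $L^2$-subspaces are orthogonal), and moreover each $k^{L^j}_{\bar\lambda}$ is orthogonal to all of $PW_E$, because $L^j$ is disjoint from $E$. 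Hence the above display is (up to the question of whether the left-hand side lies in $PW_E$) a decomposition into mutually orthogonal pieces: $\tilde f \in PW_E$ on one side, and $\sum_j c_j k^{L^j}_{\bar\lambda}$ in the orthogonal complement $PW_{E \cup (\bigcup_j L^j)} \ominus PW_E$.

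Now, $\frac{f(z)}{z-\lambda} \in PW_E$ holds if and only if the "complementary" part vanishes, i.e. $\sum_{j=1}^{n-1} c_j k^{L^j}_{\bar\lambda} \equiv 0$. Since the $L^j$ are pairwise disjoint intervals, the functions $k^{L^j}_{\bar\lambda}$ are linearly independent (they are Fourier transforms of exponentials restricted to disjoint intervals, hence their "spectra" are disjoint), so this forces $c_j = 0$ for every $j = 1, \dots, n-1$, that is, $F^j(\lambda) = 0$ for $j = 1, \dots, n-1$. Together with $F^n(\lambda) = f(\lambda) = 0$, and using $f^j = F^j - F^{j-1}$ (with $F^0 \equiv 0$), this is equivalent to $f^j(\lambda) = 0$ for all $j = 1, \dots, n$. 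Conversely, if all $f^j(\lambda) = 0$, then all $F^j(\lambda) = 0$, so all $c_j = 0$ and $\frac{f(z)}{z-\lambda} = \tilde f \in PW_E$.

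The only point requiring a little care — and the main (minor) obstacle — is justifying that the decomposition in Lemma \ref{div} is genuinely an orthogonal one in the ambient space $PW_{\widehat E}$, $\widehat E = E \cup \bigcup_j L^j = [a_1, b_n]$, and in particular that membership of $\frac{f}{z-\lambda}$ in $PW_E$ can be tested by projecting onto the orthogonal complement of $PW_E$ inside $PW_{\widehat E}$. This follows from the observation that $\frac{f(z)}{z-\lambda} = \tilde f(z) + \sum_j c_j k^{L^j}_{\bar\lambda}(z) \in PW_{\widehat E}$ always (each summand lies there), and that the orthogonal projection of $PW_{\widehat E}$ onto $PW_E$ is just multiplication of the inverse Fourier transform by $\mathbf{1}_E$; so $\frac{f}{z-\lambda} \in PW_E$ exactly when its $PW_E^{\perp}$-component, namely $\sum_j c_j k^{L^j}_{\bar\lambda}$, is zero. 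Linear independence of the $k^{L^j}_{\bar\lambda}$ then finishes the argument.
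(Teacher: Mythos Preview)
Your proof is correct and follows essentially the same route as the paper's: both apply Lemma~\ref{div} and reduce the question to the vanishing of the partial sums $F^j(\lambda)$, then note that $F^j=\sum_{k\le j}f^k$ so that the conditions $F^j(\lambda)=0$ for all $j$ and $f^j(\lambda)=0$ for all $j$ are equivalent. The paper's version is a two-line proof that takes for granted the step you spell out carefully---namely that $\sum_j c_j k^{L^j}_{\bar\lambda}\in PW_E$ forces every $c_j=0$---so your orthogonality argument in $PW_{\widehat E}$ is just a (correct) elaboration of what the paper treats as obvious.
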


\begin{proof}
By Lemma \ref{div}, 
 $\frac{f(z)}{z-\lambda} \in PW_E$ if and only if
$F^j(\lambda) = 0$ for any $j$. It remains to note that $F^j = \sum_{k=1}^j f^k$.
\end{proof}

\begin{remark}
\label{div2}
{\rm We will sometimes use this statement in a slightly more general setting. Assume that $f\in PW_E +z PW_E$.
Then $f$ is not in the Paley--Wiener space, but it still can be written as $f= \sum_{j=1}^n f^j$
where the functions $f^j$ have their spectra (understood in the distributional sense) in $I^j$.
Indeed, if $g=\sum_{j=1}^n g^j \in PW_E$ with $g_j\in PW_{I^j}$, then 
$zg=\sum_{j=1}^n z g^j$ and the spectrum of $zg^j$ is contained in $I^j$. 

It follows easily from Corollary \ref{div1} that for $f\in PW_E +z PW_E$ one still has
$\frac{f(z)}{z-\lambda} \in PW_E$ if and only if $f^j(\lambda) = 0$ for any $j$.  }
\end{remark}

As mentioned above, in the case when $E$ is a finite union of intervals Riesz bases of exponentials with real 
frequencies were constructed G. Kozma and S. Nitzan \cite{kn}. We use their method to construct an exponential basis 
on three intervals with some additional properties. 

In what follows we will often use the notion of the (conjugate) indicator diagram of an entire function of exponential type.
Recall that for a function in $PW_I$, where $I = [a,b]$ is an interval, its indicator diagram is contained in $-iI = [-ib,  -ia]$.
The same is true for a function which is in $\mathcal{P}\cdot PW_I$ (where $\mathcal{P}$ denotes the set of all polynomials)
and thus has its spectrum in $I$ in the distributional sense. It will be more convenient for us
to work with the conjugate indicator diagrams since for $f\in PW_I$ its conjugate indicator diagram
is contained in $iI$. We refer to \cite[Lecture 9]{lev} or \cite[Chapter 5]{boas} 
for the definition and the properties of indicator diagrams. We will denote by $\diag f$
the conjugate indicator diagram of an entire function $f$ of exponential type.

\begin{proposition}
\label{basis}
Let $E = [0, a] \cup [b,c] \cup [d, 2\pi]$. Then there exists a sequence $\Gamma = \{\gamma_n\} \subset \mathbb{Z}$ 
such that $\{e_\gamma\}_{\gamma\in \Gamma}$ is a Riesz basis in $L^2(E)$ and, moreover, there exists an entire function $G$ 
with the conjugate indicator diagram $[0, i|E|]$ such that its zero set coincides with $\Gamma$ and 
$$
|G(z)| \asymp {\rm dist}\,(z, \Gamma), \qquad |\ima z| \le 1.
$$
In particular, $|G'(\gamma)| \asymp 1$, $\gamma \in \Gamma$.
\end{proposition}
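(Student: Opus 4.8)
The plan is to run the Kozma--Nitzan machinery \cite{kn} while keeping enough control on the resulting spectrum to produce the accompanying function $G$; indeed, the Riesz basis statement is essentially that machinery, and the substance of the proposition lies in $G$. Write $F=(0,2\pi)\setminus E=[a,b]\cup[c,d]$, a union of two intervals. Since any candidate spectrum $\Gamma$ lies in $\Z$, the system $\{e_\gamma\}_{\gamma\in\Gamma}$ is orthogonal in $L^2(0,2\pi)$ and, for every finite linear combination,
$$
\Big\|\sum_{\gamma\in\Gamma}c_\gamma e_\gamma\Big\|_{L^2(E)}^2=2\pi\sum_{\gamma\in\Gamma}|c_\gamma|^2-\Big\|\sum_{\gamma\in\Gamma}c_\gamma e_\gamma\Big\|_{L^2(F)}^2 .
$$
Hence $\{e_\gamma\}_{\gamma\in\Gamma}$ is a Riesz basis in $L^2(E)$, and $\{e_n\}_{n\in\Z\setminus\Gamma}$ one in $L^2(F)$, as soon as the multiplication operators $M_{\mathbf 1_F}$ and $M_{\mathbf 1_E}$, compressed respectively to $\overline{\mathrm{span}}\{e_\gamma:\gamma\in\Gamma\}$ and to its orthogonal complement, are strict contractions: the lower Riesz bound is immediate from the displayed identity, and the completeness of each system follows from the lower bound furnished by the other. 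Producing such a $\Gamma$ is precisely what \cite{kn} does: exploiting the slow, $O(1/n)$, decay of the Fourier coefficients of $\mathbf 1_F$ together with the two-interval shape of $F$, one selects the integers of $\Gamma$ block by block so that both compressions become contractions.

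What is needed in addition — and this is where the argument goes beyond \cite{kn} — is that the block selection be performed so that $\Gamma$ is \emph{regular}: every interval $(-R,R)$ should contain $\tfrac{|E|}{\pi}R+O(1)$ points of $\Gamma$, with $\Gamma\cap(0,\infty)$ and $\Gamma\cap(-\infty,0)$ each of density $\tfrac{|E|}{2\pi}$ with bounded discrepancy, and with $\Gamma$ balanced about the origin in the sense that $\sum_{\gamma\in\Gamma}1/\gamma$ converges (as a principal value) to $0$. Being a subset of $\Z$, $\Gamma$ is then automatically $1$-separated, and the regularity forces the gaps of $\Gamma$ to be bounded. This should be attainable because, for two gaps, the selection patterns can be taken periodic over each block and then modulated slowly, the modulation disturbing the counting function and the balance condition by bounded amounts only.

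Granting such a $\Gamma=\{\gamma_n\}$, let $P$ be the canonical product (of genus one) with zero set exactly $\Gamma$. By the regular two-sided density $\tfrac{|E|}{2\pi}$ and the vanishing of the principal value $\sum 1/\gamma$, the conjugate indicator diagram of $P$ is the vertical segment $i[-|E|/2,|E|/2]$ — the density-$\tfrac{|E|}{2\pi}$ analogue of $\sin\pi z$, for which $\diag(\sin\pi z)=i[-\pi,\pi]$. Consequently $G(z):=e^{i|E|z/2}P(z)$ is entire, has the same zero set $\Gamma$, and satisfies $\diag G=[0,i|E|]$. On the real axis $|G(x)|=|P(x)|$, and the $1$-separation and bounded discrepancy of $\Gamma$ give $|P(x)|\asymp\mathrm{dist}(x,\Gamma)$ together with $|P(x\pm i)|\asymp 1$; thus $G$ is a function of sine type, and the standard properties of such functions (see \cite{lev}) yield $|G(z)|\asymp\mathrm{dist}(z,\Gamma)$ throughout the strip $|\ima z|\le1$. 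Letting $z\to\gamma$ gives $|G'(\gamma)|\asymp1$.

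The principal difficulty is the reconciliation flagged above: the Kozma--Nitzan contractivity estimates are insensitive to sparse irregularities of the spectrum, whereas the conclusions $\diag G=[0,i|E|]$ and, above all, the two-sided bound $|G(z)|\asymp\mathrm{dist}(z,\Gamma)$ demand that $\Gamma$ be evenly spread, with bounded discrepancy and no drift. Arranging for the block construction to deliver both the contractions and this quantitative regularity at once is the technical heart of this step, and it is also the reason the construction is carried out for three intervals — two gaps — rather than for a general finite union.
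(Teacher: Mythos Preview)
Your proposal has a genuine gap: you correctly identify that the substance of the proposition is the regularity of $\Gamma$ (bounded discrepancy, so that the canonical product is of sine type), but you do not actually establish it. Saying that the Kozma--Nitzan block selection ``should be attainable'' with the required regularity, and then deferring this to ``the technical heart of this step,'' is precisely the part that needs to be done. Your description of \cite{kn} as proceeding via contractivity of compressed multiplication operators is also not what that paper does; the mechanism of \cite{kn} (Lemma~\ref{key} here) is a combinatorial decomposition of $[0,2\pi/N]$ into level sets $A_n$, each carrying a Riesz basis with frequencies in $N\Z$, whose shifted union is a basis for the original set. Trying to retrofit quantitative regularity onto a generic Kozma--Nitzan spectrum is exactly the difficulty the paper avoids.

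The paper's route is the reverse of yours: it builds the regularity in from the bottom and lets the Kozma--Nitzan lemma do only the combining. First it uses the gluing trick (Remark~\ref{glue}): since $\Gamma\subset\Z$, the system has the same geometry on $E$ and on the two-interval set $\tilde E=[b,c]\cup[d,2\pi+a]$. Then Lemma~\ref{key} applied to $\tilde E$ yields an $N$ for which every $A_n$ is a single interval (in the cyclic sense). For a single interval one has Lemma~\ref{bas1} (Seip's construction plus Avdonin's $1/4$-theorem), which produces $\Gamma_n\subset N\Z$ with an explicit generating function $G_n$ of the correct diagram satisfying $|G_n(z)|\asymp\mathrm{dist}(z,\Gamma_n)$ in the strip --- the regularity is proved there, by hand, via an Abel-summation estimate on $\sum\delta_k/(k-m)$. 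Finally $\Gamma=\bigcup_n(\Gamma_n+n)$ and $G(z)=\prod_nG_n(z-n)$; the diagram and the two-sided estimate for $G$ follow immediately from those of the factors. So the missing idea in your argument is not a refinement of Kozma--Nitzan but rather the single-interval Lemma~\ref{bas1}, which supplies the regular building blocks that the combinatorial lemma then assembles.
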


We start with the case of one interval. The proof follows a nice idea due to K. Seip \cite{seip}.

\begin{lemma}
\label{bas1}
Let $\al\in(0,1)$. Then there exists $\Gamma = \{\gamma_n\} \subset \alpha\Z$ with $\gamma_0 = 0$ 
such that $\{e_\gamma\}_{\gamma \in \Gamma}$ is a Riesz basis for $L^2(0,2\pi)$ and the function
\begin{equation}
\label{vp}
G(z) = e^{\pi i z} \lim_{R\to\infty}  z \prod_{0<|\gamma_n| <R}\Big(1-\frac{z}{\gamma_n}\Big) 
= z e^{\pi i z} \, {\rm v.p.}\prod_{n\ne 0}\Big(1-\frac{z}{\gamma_n}\Big)
\end{equation}
has conjugate indicator diagram $[0, 2\pi i]$ and satisfies $|G(z)| \asymp  {\rm dist}\,(z, \Gamma)$, $|\ima z| \le 1$.
\end{lemma}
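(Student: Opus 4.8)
The plan is to perturb the integer lattice $\Z$ to a sublattice-like set $\Gamma \subset \alpha\Z$ in such a way that $\{e_\gamma\}_{\gamma\in\Gamma}$ stays a Riesz basis for $L^2(0,2\pi)$, while keeping strong control on the canonical product. The idea due to Seip is the following: start from the orthonormal basis $\{e^{inx}\}_{n\in\Z}$ of $L^2(0,2\pi)$, whose generating function is essentially $\sin\pi z$ (with conjugate indicator diagram $[0,2\pi i]$ after multiplying by $e^{\pi i z}$), and replace the frequency $n$ by a nearby point of $\alpha\Z$ only for those $n$ lying in a sparse set of locations, so that the perturbation is a finite or summable modification at scale $\alpha$. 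Concretely, for each large dyadic block one moves one integer a distance of order $1-\alpha$ inward; the net effect is to decrease the "local density" just enough so that the conjugate indicator diagram of the product shrinks from $[0,2\pi i]$ (for $\sin$-type) to $[0,2\pi i]$... wait — here we actually want to keep it exactly $[0,2\pi i]$, i.e. the completeness radius must stay $2\pi$, so the perturbation must be density-neutral in the Beurling sense: each removed integer is compensated by a point of $\alpha\Z$ inserted nearby, so $\mathcal D_\pm(\Gamma) = 1$ is preserved.

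First I would fix the construction of $\Gamma$: choose a rapidly increasing sequence of "defect locations" $\{m_k\}$ and set $\gamma_n = \alpha\lfloor n/\alpha\rfloor$ or a similar rounding only in small windows around $m_k$, leaving $\gamma_n = n$ elsewhere, arranged so that $\Gamma\subset\alpha\Z$, $0\in\Gamma$, and $\Gamma$ differs from $\Z$ by a uniformly discrete, uniformly separated modification with $\sup_n|\gamma_n - n| < \infty$. Second, I would invoke a Kadec-type / Avdonin "$1/4$ in the mean" stability theorem: since the perturbation $\gamma_n - n$ is small on average (the defects being sparse), $\{e_\gamma\}_{\gamma\in\Gamma}$ is a Riesz basis for $L^2(0,2\pi)$. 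Third, and this is the analytic heart, I would analyze the product $G$ in \eqref{vp}: writing $G(z) = e^{\pi i z} z\,\mathrm{v.p.}\prod_{n\ne 0}(1-z/\gamma_n)$ and comparing term by term with $\sin\pi z = \pi z\,\prod_{n\ne0}(1-z/n)$, the ratio $G(z)/\sin\pi z$ is a product over the (sparse) defect indices of factors $\frac{1-z/\gamma_n}{1-z/n} = \frac{\gamma_n - z}{\gamma_n}\cdot\frac{n}{n-z}$; one shows this ratio and its reciprocal are bounded and bounded away from $0$ on the strip $|\ima z|\le 1$ away from the zeros, which is where sparseness of $\{m_k\}$ is used decisively. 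From $|\sin\pi z|\asymp \mathrm{dist}(z,\Z)$ on $|\ima z|\le1$ and the matching of zeros, one then gets $|G(z)|\asymp\mathrm{dist}(z,\Gamma)$ there, and in particular $|G'(\gamma)|\asymp 1$; the indicator diagram computation reduces to noting that the v.p. product has indicator diagram $[-\pi i,\pi i]$ (density $1$, symmetric) and the factor $e^{\pi i z}$ shifts it to $[0,2\pi i]$.

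The main obstacle I expect is the third step: controlling the canonical product $G$ uniformly on the whole strip $|\ima z|\le 1$, not just near the real axis. Near a cluster of modified frequencies the naive term-by-term comparison can blow up, so the defect windows must be chosen sparse enough (e.g. $m_{k+1} \gg m_k^2$ or lacunary with huge ratio) that at any point $z$ at most $O(1)$ defect factors are "active" and the tail contributes a convergent sum with small total variation. Making the quantitative estimate $\bigl|\log|G(z)/\sin\pi z|\bigr| \le C$ rigorous on the strip — handling both the region near real defect points and the region of large $|\ima z|$ within the strip — is the delicate part; everything else (Riesz basis stability, indicator diagram bookkeeping, passing from $G$ on the real line to the asymptotic $|G'(\gamma)|\asymp1$) is routine once the product estimate is in hand.
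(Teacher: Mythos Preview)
There is a genuine gap in your construction. You propose to set $\gamma_n=n$ for all $n$ outside a sparse set of ``defect locations'' and only round to $\alpha\Z$ near those defects. But for a generic $\alpha\in(0,1)$ one has $\Z\cap\alpha\Z=\{0\}$ (and even for rational $\alpha=p/q$ only the multiples of $p$ lie in both lattices), so the condition $\Gamma\subset\alpha\Z$ forces you to move \emph{every} integer, not just a sparse subset. Your entire sparse-defect picture, and with it the term-by-term comparison of $G$ with $\sin\pi z$ over finitely many ``active'' factors, collapses.

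The paper's construction is quite different in shape. One fixes a large block length $M$ and, on each block $(lM,(l+1)M]$, chooses $M$ points $\gamma_{lM+1}<\dots<\gamma_{(l+1)M}$ of $\alpha\Z$ inside that block so that the block sum $\Delta_l=\sum_{k=lM+1}^{(l+1)M}(\gamma_k-k)$ lies in $(-\alpha,\alpha)$; an intermediate-value argument on the discrete set of admissible configurations shows this is possible once $(1-\alpha)M^2>2M+2$. One further alternates signs of the $\Delta_l$ so that all partial sums $\sum_{k=m_1}^{m_2}\delta_k$ stay uniformly bounded. The Riesz basis property then comes from Avdonin's ``$1/4$ in the mean'' theorem, exactly as you guessed. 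For the product estimate, however, the comparison with $\sin\pi z$ is not over a sparse set of factors but over \emph{all} $n$: one writes $\log\bigl|\prod(1-z/\gamma_n)\big/\prod(1-z/n)\bigr|$ as $\sum_n \delta_n\bigl(\frac{1}{n-m}-\frac{1}{n}\bigr)+O(1)$ and controls both series by Abel summation, using precisely the uniform bound on partial sums of $\delta_n$ built into the construction. That Abel-summation step is the analytic heart you are missing; sparseness plays no role.
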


\begin{proof}
We will choose $\Gamma$ as a small perturbation  of integers.  
Fix a sufficiently large $M\in \mathbb{N}$ so that $M>4$ and 
\begin{equation}
\label{alp}
(1-\al)M^2 >2M+2.
\end{equation}
Now consider the interval $(0, M]$ and choose the points $\gamma_1, \gamma_2, \dots \gamma_M\in (0,M]\cap\al\Z$, 
$\gamma_1< \gamma_2< \dots<\gamma_M$,
so that for the differences $\delta_k = \gamma_k- k$ we have
\begin{equation}
\label{gor}
\Delta_1 = \sum_{k=1}^M \delta_k \in (-\alpha, \alpha).
\end{equation}
Let us show that such choice is possible. If we take $\gamma_k = \alpha k$, $k=1, \dots, M$ 
(i.e., the smallest possible choice), then we have
$$
\Delta_1 =  \sum_{k=1}^M (\alpha-1)k = \frac{(\alpha-1)M(M+1)}{2} <-1.
$$
On the other hand, if we take the largest possible $\gamma_k$, 
namely $\gamma_k = \alpha \big[\frac{M}{\al}\big] +\al(k -M)$, then
$$
\Delta_1 = \al M \bigg[\frac{M}{\al}\bigg] -\al M^2 - \frac{(1-\alpha)M(M+1)}{2} \ge
\frac{(1-\alpha)M^2}{2} -\frac{(1+\alpha)M}{2} >1.
$$
Now let us start moving the points from the group $\{\alpha k, \ k=1, \dots M\}$ to the right starting from the largest point. Since changing 
the value of  $\gamma_k$ from $\alpha m$ to $\alpha (m+1)$ adds  $\alpha$ to $\Delta_1$ we can achieve \eqref{gor} for some choice of 
distinct $\gamma_k$. 

Now we repeat the procedure and choose 
$\gamma_{M+1},\gamma_{M+2}, \dots \gamma_{2M}\in (M,2M]\cap\al\Z$, $\gamma_{M+1} <\gamma_{M+2}
 \dots<\gamma_{2M}$,
so that 
$$
\Delta_2 = \sum_{k=M+1}^{2M} \delta_k \in (-\alpha, \alpha), \qquad \delta_ k = \gamma_k-k.
$$
Moreover, we can also choose the points so that $\Delta_1$ and $\Delta_2$ are of different signs and so 
$\Delta_1   + \Delta_2 \in (-\alpha, \alpha)$. Analogously, for any interval $(lM+1, (l+1)M]$, $l\in \mathbb{Z}$,
we find the points $\gamma_{lM+1},\gamma_{lM+2}, \dots \gamma_{lM +M}\in (lM+1, (l+1)M]\cap\al\Z$ 
so that 
$$
\Delta_l= \sum_{k=lM+1}^{(l+1)M} \delta_k \in (-\alpha, \alpha), \qquad \sum_{k=l_1}^{l_2} \Delta_l \in (-\alpha, \alpha)
$$
for any $l, l_1, l_2\in\Z$. Note also that, by the construction, $\{\delta_k\}\in \ell^\infty$. Thus,
\begin{equation}
\label{ab}
\sup_{m_1, m_2} \bigg| \sum_{k=m_1}^{m_2} \delta_k\bigg| <\infty.
\end{equation}
Without loss of generality we can take $\gamma_0 = 0$. 

Since
$$
\frac{1}{M} \bigg| \sum_{k=lM+1}^{(l+1)M} \delta_k \bigg|  <\frac{1}M <\frac{1}{4},
$$
the sequence $\Gamma = \{\gamma_n\}_{n\in\Z}$ generates a Riesz basis of exponentials
in $L^2(0, 2\pi)$ by the Avdonin theorem \cite{avd, nik}.

We define $G$ by \eqref{vp} (the product converges, since $\delta_k$ are bounded).
Fix some $z$ with $|\ima z| \le 1$ and sufficiently large $\rea z>0$. 
Let $\gamma_m$ be the element of $\Gamma$ closest to $z$ and
 $l$ be the integer closest to $z$. Let us show  that 
$$
|G(z)| \asymp |z-\gamma_m| \prod_{n\in\Z\setminus\{0,m\}} \bigg|1-\frac{\gamma_m}{\gamma_n} \bigg| \asymp
|z-\gamma_m| \prod_{n\in\Z\setminus \{0, l\}} \bigg|1-\frac{z}{n} \bigg|
\asymp |z-\gamma_m| \frac{|\sin \pi z|}{|z-l|}  \asymp |z-\gamma_m|.
$$
Indeed, 
$$
\begin{aligned}
\log  \prod_{n\in\Z\setminus\{0,m\}} \bigg|1-\frac{z}{\gamma_n} \bigg|
&  \prod_{n\in\Z\setminus \{0,l\}} \bigg|1-\frac{z}{n} \bigg|^{-1} = 
\sum_{n\in\Z\setminus\{0, m, l\}} 
\frac{z(\gamma_n - n)}{(n-z)\gamma_n} +O(1) \\
& = 
\sum_{n\in\Z\setminus\{0, m\}} 
\frac{m \delta_n}{(n-m)n} +O(1) 
= \sum_{n\in\Z\setminus\{0, m\}} \bigg(\frac{\delta_n}{n-m} - \frac{\delta_n}{n}\bigg) +O(1).  
\end{aligned}
$$
Applying the Abel transform and using \eqref{ab} it is easy to show that the series
$$
\sum_{n=m+1}^\infty \frac{\delta_n}{n-m} \qquad \text{and} \qquad  \sum_{n=-\infty}^{m-1} \frac{\delta_n}{n -m} 
$$
converge and their sums are uniformly bounded with respect to $m$. 
It is clear from our estimates that the product has a symmetric diagram $[-\pi i, \pi i]$ and, thus, the conjugate diagram
of $G$ equals $[0, 2\pi i]$. The conclusion of the lemma follows. 
\end{proof}

\begin{remark}
\label{glue}
{\rm  We will repeatedly use the following simple (and well-known) observation. 
Let $\Gamma \subset \Z$, $E = [0, a] \cup [b,c] \cup [d, 2\pi]$
and $\tilde E = [b,c] \cup [d, 2\pi +a]$, i.e., $\tilde E$ is obtained by gluing together the last interval of $E$
with its first interval shifted by $2\pi$.
Then the geometric properties (such as completeness or being a Riesz basis) 
of the system $\{e_\gamma\}_{\gamma \in \Gamma}$ are the same in the spaces $L^2(E)$ and $L^2(\tilde E)$. 
Indeed, the mapping 
$$
(\tilde Uf) (x) = 
\begin{cases} f(x), & x\in  [b,c] \cup [d, 2\pi], \\
f(x-2\pi), & x\in  (2\pi, 2\pi +a],
\end{cases}
$$
is a unitary operator from $L^2(E)$ to $L^2(\tilde E)$ and $U e_\gamma = e_\gamma$
for any $\gamma\in\Z$ due to periodicity. Of course, the same is true for any number of intervals.}
\end{remark}

\begin{corollary}
\label{bas2}
Let $E = [0, a] \cup [b, 2\pi]$. Then there exists a sequence $\Gamma = \{\gamma_n\} \subset \mathbb{Z}$ 
such that $\{e_\gamma\}_{\gamma\in \Gamma}$ is a Riesz basis in $L^2(E)$ and, moreover, there exists an entire function $G$ 
with the conjugate indicator diagram $[0, i|E|]$ such that its zero set coincides with $\Gamma$ and 
$|G(z)| \asymp {\rm dist}\,(z, \Gamma)$, $|\ima z| \le 1$.
\end{corollary}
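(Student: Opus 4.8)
The plan is to reduce the two-interval case to the one-interval result of Lemma~\ref{bas1}, by combining a real dilation of the variable with the periodic gluing observation of Remark~\ref{glue}.

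First I would record the relevant length: put $\ell := |E| = 2\pi + a - b$. Since the intervals of $E$ are disjoint we have $a < b$, hence $\ell \in (0, 2\pi)$, and we may set $\al := \ell/(2\pi) \in (0,1)$. Applying Lemma~\ref{bas1} with this value of $\al$ produces a sequence $\widetilde\Gamma \subset \al\Z$ with $0 \in \widetilde\Gamma$; writing $\widetilde\Gamma = \al\Gamma$ we obtain $\Gamma \subset \Z$. By the lemma, $\{e_{\widetilde\gamma}\}_{\widetilde\gamma \in \widetilde\Gamma}$ is a Riesz basis for $L^2(0,2\pi)$, and the function $\widetilde G$ defined by \eqref{vp} has conjugate indicator diagram $[0, 2\pi i]$, zero set exactly $\widetilde\Gamma$, and $|\widetilde G(z)| \asymp {\rm dist}\,(z, \widetilde\Gamma)$ for $|\ima z| \le 1$.

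Next I would transfer the Riesz basis property to $L^2(E)$. A change of variable $t = \al s$ shows that $\{e_\gamma\}_{\gamma\in\Gamma}$ is a Riesz basis for $L^2(0,\ell)$ if and only if $\{e_{\al\gamma}\}_{\gamma\in\Gamma}$ is a Riesz basis for $L^2(0,2\pi)$, and the latter holds by the previous step because $\al\Gamma = \widetilde\Gamma$. Since translating $[0,\ell]$ to $[b, 2\pi+a]$ only multiplies each $e_\gamma$ by the unimodular constant $e^{-i\gamma b}$, the system $\{e_\gamma\}_{\gamma\in\Gamma}$ is also a Riesz basis for $L^2([b, 2\pi+a])$. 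Now $\Gamma \subset \Z$, so Remark~\ref{glue} (which, as noted there, applies to any number of intervals: here $\widetilde E = [b,2\pi]\cup\big([0,a]+2\pi\big) = [b,2\pi+a]$) transfers this to $L^2(E)$, i.e.\ $\{e_\gamma\}_{\gamma\in\Gamma}$ is a Riesz basis for $L^2(E)$.

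Finally, set $G(z) := \widetilde G(\al z)$. Its zero set is $\{z : \al z \in \widetilde\Gamma\} = \al^{-1}\widetilde\Gamma = \Gamma$. A dilation of the variable by a positive real factor $\al$ multiplies the indicator function by $\al$, so $\diag G = \al\cdot[0, 2\pi i] = [0, i\ell] = [0, i|E|]$. Moreover, for $|\ima z| \le 1$ we have $|\ima(\al z)| = \al|\ima z| \le \al < 1$, hence $|G(z)| = |\widetilde G(\al z)| \asymp {\rm dist}\,(\al z, \widetilde\Gamma) = \al\,{\rm dist}\,(z, \Gamma) \asymp {\rm dist}\,(z,\Gamma)$, which is the desired estimate (and in particular gives $|G'(\gamma)| \asymp 1$ for $\gamma\in\Gamma$). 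There is no serious obstacle here, since the statement is essentially a rescaling of Lemma~\ref{bas1}; the only two small points that need care are the behaviour of the conjugate indicator diagram under a real dilation and the necessity of producing \emph{integer} frequencies — which is precisely why Lemma~\ref{bas1} is formulated for an arbitrary $\al \in (0,1)$ — before the periodic gluing of Remark~\ref{glue} can be invoked.
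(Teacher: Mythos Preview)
Your proof is correct and follows exactly the same approach as the paper: reduce to the single interval $[b,2\pi+a]$ via Remark~\ref{glue} and then rescale the construction of Lemma~\ref{bas1}. The paper's proof merely says ``we obtain the corresponding basis by an obvious rescaling of the system from Lemma~\ref{bas1}'', whereas you have spelled out that rescaling (the dilation $G(z)=\widetilde G(\alpha z)$ and the change of variable $t=\alpha s$) in full detail, including the verification of the indicator diagram and the distance estimate.
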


\begin{proof}
Note that if $\Gamma \subset \Z$, then, by Remark \ref{glue}, $\{e_\gamma\}_{\gamma \in \Gamma}$ is a Riesz basis or not
simultaneously for the sets $E = [0, a] \cup [b, 2\pi]$ and $\tilde E = [b, 2\pi +a]$. Since $|\tilde E| = |E| <2\pi$,
we obtain the corresponding basis by an obvious rescaling of the system from Lemma \ref{bas1}. 
\end{proof}

In the case of three intervals a key ingredient of our construction is the following lemma from \cite{kn}. 

\begin{lemma} \textup(\cite[Lemma 2]{kn}\textup)
\label{key}
Let $S\subset [0, 2\pi]$. For $n=1, \dots, N$, denote by $A_n$ the set of those $x\in \big[0, \frac{2\pi}{N}\big]$ for which 
at least $n$ of the numbers $x +\frac{2\pi j}{N}$, $j=0, \dots, N-1$, belong to $S$. Assume
that $\Lambda_1, \dots, \Lambda_N \subset N\mathbb{Z}$ and $\{e_\lambda\}_{\lambda\in \Lambda_n}$ 
is a Riesz basis for $L^2(A_n)$. Then $\cup_{n=1}^N (\Lambda_n +n)$ is a Riesz basis for $L^2(S)$. 

Moreover, if $S$ is a union of $L$ disjoint intervals, then there exists $N\in \N$ such that each $A_n$ 
is a union of at most $L-1$ intervals. 
\end{lemma}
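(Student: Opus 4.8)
The plan is to follow the proof of \cite[Lemma 2]{kn}; I sketch its structure. First I would pass to a fibre model of $L^2(S)$. For $x\in[0,\tfrac{2\pi}N]$ set $J(x)=\{\,j\in\{0,\dots,N-1\}:x+\tfrac{2\pi j}N\in S\,\}$ and $\kappa(x)=\#J(x)$; by the definition of $A_n$ we have $x\in A_n$ exactly when $\kappa(x)\ge n$, so that $\#\{\,n\in\{1,\dots,N\}:x\in A_n\,\}=\kappa(x)$. Cutting $[0,2\pi]$ into the $N$ congruent subintervals $[\tfrac{2\pi j}N,\tfrac{2\pi(j+1)}N]$ and translating each back to $[0,\tfrac{2\pi}N]$ identifies $L^2(S)$ with the space of measurable sections $x\mapsto F(x)\in\co^{J(x)}$ with $\int_0^{2\pi/N}\|F(x)\|^2\,dx<\infty$. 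Since $\lambda\in N\Z$ makes $t\mapsto e^{i\lambda t}$ periodic with period $\tfrac{2\pi}N$, in this model $e_{\lambda+n}|_S$ becomes the section $x\mapsto e^{i(\lambda+n)x}u_n(x)$, where $u_n(x)=\bigl(e^{2\pi inj/N}\bigr)_{j\in J(x)}\in\co^{J(x)}$ and $\|u_n(x)\|=\sqrt{\kappa(x)}\le\sqrt N$.

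The crux is that $u_1(x),\dots,u_{\kappa(x)}(x)$ is a basis of $\co^{J(x)}$: the $\kappa(x)\times\kappa(x)$ submatrix of $\bigl(e^{2\pi inj/N}\bigr)$ with rows $n=1,\dots,\kappa(x)$ and columns $j\in J(x)$ is a Vandermonde matrix in the distinct roots of unity $\{e^{2\pi ij/N}:j\in J(x)\}$ times a diagonal unitary, hence invertible. Let $V=(V_x)$ be a measurable field of unitaries with $V_x\,{\rm span}\bigl(u_1(x),\dots,u_m(x)\bigr)={\rm span}(e_1,\dots,e_m)$ for all $m\le\kappa(x)$ (produced, say, by Gram--Schmidt). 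Using $V$ to identify $L^2(S)$ with $\bigoplus_{m=1}^{N}L^2(A_m)$ (the $m$-th coordinate of the fibre $\co^{\kappa(x)}$ naturally living over $A_m$), I would then check that the analysis operator of $\{e_{\lambda+n}\}_{\lambda\in\Lambda_n,\;1\le n\le N}$ is block triangular: the $n$-th group of coefficients depends only on levels $m\le n$, because $(V_xu_n(x))_m=0$ for $m>n$ (as $u_n(x)\in{\rm span}(u_1(x),\dots,u_n(x))$, or the fibre has dimension $<n$). Its diagonal block at $n$ is the analysis operator of the Riesz basis $\{e_{\lambda+n}\}_{\lambda\in\Lambda_n}$ of $L^2(A_n)$ — which is the image of the given Riesz basis $\{e_\lambda\}_{\lambda\in\Lambda_n}$ under multiplication by $e^{int}$ — precomposed with multiplication by the positive function $R_{nn}(x):=(V_xu_n(x))_n$; since $N$ is fixed, $J(x)$ takes only finitely many values and $R_{nn}(x)$ stays bounded away from $0$ and $\infty$, so this block is boundedly invertible. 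The off-diagonal blocks are bounded because $\Lambda_n\subset N\Z$ is uniformly discrete, so each $\{e_{\lambda+n}\}_{\lambda\in\Lambda_n}$ is a Bessel system over any bounded set; and a block triangular operator with $N$ blocks, bounded entries and boundedly invertible diagonal blocks is boundedly invertible. Hence $\{e_{\lambda+n}\}$ is a Riesz basis of $L^2(S)$. I expect this middle step to be the main obstacle: a literal fibrewise diagonalisation is impossible, since partial discrete Fourier matrices are arbitrarily ill conditioned, and the point is precisely that passing to the flag (Gram--Schmidt) unitary turns the combination into a triangular operator with boundedly invertible diagonal, after which the finiteness of $N$ makes all the bounds uniform.

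For the last assertion I would note that $\kappa$ is a step function on $[0,\tfrac{2\pi}N]$ whose at most $2L$ jumps occur at the reductions modulo $\tfrac{2\pi}N$ of the endpoints of $S$ ($+1$ at a left endpoint, $-1$ at a right endpoint; an endpoint $0$ or $2\pi$ producing no visible jump once $N$ is large). Taking $N$ large keeps $\kappa$ within a band of width $O(L)$ about $\tfrac{N|S|}{2\pi}$, so all but $O(L)$ of the $A_n$ are empty or all of $[0,\tfrac{2\pi}N]$; and choosing $N$ moreover so that the reductions of all left endpoints precede those of all right endpoints — possible along an infinite set of $N$ by equidistribution modulo $1$ of $\bigl(\tfrac N{2\pi}e\bigr)_{e\in\partial S}$ — makes $\kappa$ unimodal, whence every $A_n$ is a single interval, in particular at most $L-1$ intervals for $L\ge 2$. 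The precise bookkeeping, including the case of rational relations among the endpoints (where one can only cluster part of the jumps), is the secondary difficulty and is carried out in \cite{kn}.
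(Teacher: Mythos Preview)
The paper does not prove this lemma at all: it is quoted verbatim from \cite[Lemma~2]{kn} and followed only by a one-sentence remark on the cyclic interpretation of the sets $A_n$, with no proof environment. So there is no ``paper's own proof'' to compare against; your sketch is essentially a summary of the Kozma--Nitzan argument, which is exactly what the paper is citing.

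On the substance of your sketch: the fibre decomposition, the Vandermonde observation for the invertibility of the $\kappa(x)\times\kappa(x)$ submatrix, and the passage via Gram--Schmidt to a block-triangular analysis operator with boundedly invertible diagonal blocks are the correct ingredients, and the finiteness of $N$ does give the required uniformity. For the second assertion your outline overshoots: forcing $\kappa$ to be unimodal (and hence every $A_n$ to be a single interval) is strictly stronger than the stated conclusion and, as you yourself flag, can fail under rational relations among the endpoints. The actual claim only asks for at most $L-1$ intervals, and the argument in \cite{kn} handles this directly without needing full unimodality; since you explicitly defer that bookkeeping to \cite{kn}, this is not a gap so much as an unnecessarily strong intermediate target.
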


The last statement of the lemma should be understood in a ``cyclic way'' meaning that the union of intervals
$[0, a] \cup \big[ b, \frac{2\pi}{N} \big]$ should be considered as one interval $\big[ b, \frac{2\pi}{N} +a \big]$. 
If $\Lambda \subset N\Z$, then, by periodicity, $\{e_\lambda\}_{\lambda\in\Lambda}$ is a Riesz basis or not  simultaneously for 
$[0, a] \cup \big[ b, \frac{2\pi}{N} \big]$ and for $\big[ b, \frac{2\pi}{N} +a \big]$. 

\begin{proof}[Proof of Proposition \ref{basis}.] 
Note that if $\Gamma \subset \Z$, then $\{e_\gamma\}_{\gamma \in \Gamma}$ is a Riesz basis or not
simultaneously for the sets $E = [0, a] \cup [b,c] \cup [d, 2\pi]$ and $\tilde E = [b,c] \cup [d, 2\pi +a]$ (see Remark \ref{glue}). 
By Lemma \ref{key} we can choose $N$ such that all sets $A_1, \dots A_N$ constructed for the set $\tilde E$ 
are proper subintervals of $\big[0, \frac{2\pi}{N}\big]$ (understood in the cyclic sense). Rescaling and applying
Lemma \ref{bas1} we conclude that for each $n$ there exists a sequence $\Gamma_n \subset N\Z$ such that
$\{e_\gamma\}_{\gamma \in \Gamma_n}$ is a Riesz basis in $L^2(A_n)$ 
and the function 
$$
G_n(z) = z e^{i|A_n| z} {\rm v. p.} \prod_{\gamma\in \Gamma_n, \gamma\ne 0} \Big(1-\frac{z}{\gamma}\Big)
$$
has the conjugate diagram $[0, i |A_n|]$ and satisfies 
$|G_n(z)| \asymp  {\rm dist}\,(z, \Gamma_n)$, $|\ima z| \le 1$.
Then, by Lemma \ref{key},
$\Gamma = \cup_{n=1}^N (\Gamma_n+n) $ is a Riesz basis for $L^2(\tilde E)$
(and, hence, for $L^2(E)$) and the corresponding entire function
$G(z) = \prod_{n=1}^N G_n(z-n)$ will have the required properties.
\end{proof}

\begin{remark}
\label{compl}
{\rm It follows from the estimate $|G(z)| \asymp {\rm dist}\,(z, \Gamma)$, $|\ima z| \le 1$,
that $\Gamma $ is a uniqueness set for $PW_I$ for any interval $I$ with $|I| = |E|$. 
Indeed, if it is not the case, then 
there exists an entire function $H$ such that $HG \in PW_E $. Without loss of generality $I = [0, |E|]$. Then $H$ 
is of zero exponential type and is bounded (and even tend to zero) on the line $\ima z=1$, whence $H\equiv 0$.
In fact $\Gamma$ even generates a Riesz basis of reproducing kernels in $PW_I$, since $G$ is a sine-type function
(see \cite[Lecture 22]{lev} for details), but we do not use this fact.}
\end{remark}
\bigskip

%%%%%%%%%%%%%%%%%%%%%%%%%%%%%%%%%%%%%%%%%

\section{Structure of the biorthogonal system} 
\label{bio}

In this section we obtain a representation for the biorthogonal system 
to a complete and minimal system of reproducing kernels in $PW_E$. 
Let $E=\cup_{j=1}^N I^j$ be a finite union of disjoint intervals, let
$\{k_\lambda^E\}_{\lambda\in\Lambda}$ be a complete and minimal system of reproducing kernels in $PW_E$,
and let $\{f_\lambda\}_{\lambda\in\Lambda}$ be its biorthogonal system. 
Fix $N$ points $\lambda_1,....,\lambda_N \in \Lambda$ and put 
\begin{equation}
\label{dor}
F_k(z)=(z-\lambda_k) f_{\lambda_k}(z),\quad k=1,...,N.
\end{equation}
We can write $F_k = \sum_{j=1}^N F^j_k$, where $F_k^j$ has its spectrum in $I^j$. 

\begin{lemma}
\label{indep}
Let $E=\cup_{j=1}^N I^j$ be a finite union of intervals and let
$\{k_\lambda^E\}_{\lambda\in\Lambda}$ be a complete and minimal system of reproducing kernels in $PW_E$
such that $\Lambda$ satisfies \eqref{dens}.
Then there exist $\lambda_1,....,\lambda_N \in \Lambda$ such that the functions $F_1, \dots F_N$ 
defined by \eqref{dor} are linearly independent. 
\end{lemma}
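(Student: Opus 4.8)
The plan is to argue by induction on the number of selected points, showing that if $F_1,\dots,F_m$ (with $m<N$) have been chosen linearly independent, then one can find $\lambda_{m+1}\in\Lambda$ so that $F_{m+1}=(z-\lambda_{m+1})f_{\lambda_{m+1}}$ is not in the linear span of $F_1,\dots,F_m$. Suppose, toward a contradiction, that for \emph{every} $\lambda\in\Lambda$ the function $(z-\lambda)f_\lambda$ lies in $W:=\mathrm{span}\{F_1,\dots,F_m\}$, a fixed finite-dimensional space of entire functions of exponential type with conjugate indicator diagram contained in $[0,i|E|]$. Using the decomposition $f_\lambda=\sum_{j=1}^N f_\lambda^j$ with $\mathrm{spec}\,f_\lambda^j\subset I^j$ (Remark \ref{div2}), together with Corollary \ref{div1} and Remark \ref{div2}, the key structural fact is that $(z-\lambda)f_\lambda\in PW_E$, i.e.\ $\frac{(z-\lambda)f_\lambda(z)}{z-\lambda}=f_\lambda\in PW_E$ trivially — so instead one works with the component structure: each $F_k$ decomposes as $F_k=\sum_j F_k^j$ with $\mathrm{spec}\,F_k^j\subset I^j$, and the span condition would force each component $(z-\lambda)f_\lambda^j$ to lie in the (at most $m$-dimensional) span of the $F_k^j$.

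Next I would extract a density consequence. If all $(z-\lambda)f_\lambda$ lie in the fixed finite-dimensional space $W$, then in particular all the zero sets $\Lambda\setminus\{\lambda\}$ of the $f_\lambda$ (recall $f_\lambda(\mu)=\delta_{\lambda\mu}$, so $f_\lambda$ vanishes on $\Lambda\setminus\{\lambda\}$, hence $(z-\lambda)f_\lambda$ vanishes on all of $\Lambda$) are zero sets of elements of the finite-dimensional space $W$. Picking a generic nonzero $g\in W$ (or a suitable finite product), one gets a single entire function of exponential type at most $|E|$ that vanishes on $\Lambda$ minus a bounded number of points; by the hypothesis \eqref{dens} that $\mathcal D_+(\Lambda)=|E|/(2\pi)$, a Jensen/counting-function argument (the number of zeros of $g$ in $B(0,R)$ is at most $\frac{|E|}{2\pi}\cdot 2R+o(R)$, matching the density of $\Lambda$ exactly) shows this is only possible in degenerate situations, and one should be able to derive that $W$ must contain a function with the maximal possible zero density along $\Lambda$. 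The contradiction then comes from minimality: the system $\{k_\lambda^E\}$ being minimal means each $f_\lambda\ne 0$, and completeness plus the finite-dimensionality of $W$ cannot coexist — e.g.\ one shows $\bigcap_\lambda (W\ominus\text{stuff})$ forces a nontrivial annihilator of $\{k_\lambda^E\}$, contradicting completeness.

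More precisely, here is the cleaner route I would actually pursue. For each $\lambda$, write $(z-\lambda)f_\lambda(z)=\sum_{k=1}^m c_k(\lambda)F_k(z)$; comparing spectral components, $(z-\lambda)f_\lambda^j=\sum_k c_k(\lambda)F_k^j$ for each $j$. Evaluate the relation $f_\lambda(\mu)=\delta_{\lambda\mu}$: for $\mu\in\Lambda$, $\mu\ne\lambda$, we get $\sum_k c_k(\lambda)F_k(\mu)=0$, and differentiating or taking the limit $\mu\to\lambda$ gives $\sum_k c_k(\lambda)F_k(\lambda)=0$ as well (since $F_k(\lambda)=(\lambda-\lambda)f_{\lambda_k}(\lambda)$... wait, $F_k(\lambda)=(\lambda-\lambda_k)f_{\lambda_k}(\lambda)=(\lambda-\lambda_k)\delta_{\lambda_k\lambda}$). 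Thus for $\lambda\notin\{\lambda_1,\dots,\lambda_m\}$ the vector $(c_1(\lambda),\dots,c_m(\lambda))$ annihilates all vectors $(F_1(\mu),\dots,F_m(\mu))$, $\mu\in\Lambda$. If these vectors span $\co^m$ then all $c_k(\lambda)=0$, so $(z-\lambda)f_\lambda\equiv 0$, contradicting $f_\lambda\ne 0$. If they do not span $\co^m$, there is a fixed nonzero $(a_1,\dots,a_m)$ with $\sum_k a_k F_k(\mu)=0$ for all $\mu\in\Lambda$; but then $H:=\sum_k a_k F_k$ is a nonzero function of exponential type, vanishing on $\Lambda$, with $\mathrm{spec}\,H\subset E$ after dividing out — and since $H(z)/(z-\lambda_0)$-type manipulations via Lemma \ref{div} keep us in $PW_E+zPW_E$, one checks $H\in PW_E+zPW_E$ vanishes on $\Lambda$, which (because $\{k_\lambda^E\}$ is complete, so $\Lambda$ is a uniqueness set for $PW_E$, and the one extra degree of $z$ is absorbed by the density slack being zero) forces $H\equiv 0$, a contradiction. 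The main obstacle, and the step requiring genuine care, is exactly this last point: controlling the ``one extra polynomial degree'' and showing a nonzero element of $PW_E+zPW_E$ cannot vanish on all of $\Lambda$ — this is where the density hypothesis \eqref{dens} (equality, not just the upper bound) is essential, and it is the technical heart of the argument.
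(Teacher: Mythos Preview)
Your argument has a fundamental gap at the crucial step. In your ``cleaner route'' you observe that if $(z-\lambda)f_\lambda=\sum_k c_k(\lambda)F_k$ for every $\lambda$, then the vector $(c_k(\lambda))_k$ annihilates all vectors $(F_1(\mu),\dots,F_m(\mu))$, $\mu\in\Lambda$. But you yourself compute that $F_k(\mu)=(\mu-\lambda_k)f_{\lambda_k}(\mu)=0$ for \emph{every} $\mu\in\Lambda$ (including $\mu=\lambda_k$). So these vectors are all zero, the dichotomy is vacuous, and the ``contradiction'' you aim for --- a nonzero $H=\sum_k a_k F_k\in PW_E+zPW_E$ vanishing on all of $\Lambda$ --- is no contradiction at all: each individual $F_k$ is already such a function. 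The density hypothesis \eqref{dens} gives exactly $\mathcal D_+(\Lambda)=|E|/(2\pi)$, which is precisely the borderline for functions of type $|E|$; one extra polynomial factor is \emph{not} absorbed, and $\Lambda$ is never a uniqueness set for $PW_E+zPW_E$ when $\{k_\lambda^E\}$ is minimal.

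The idea that actually works --- and that you gesture toward in your first paragraph before abandoning it --- is to exploit the \emph{strictly smaller} exponential type of the spectral components. The paper's argument (for $N=2$, say) goes: if $F_2=cF_1$ for every choice of $\lambda_2$, then $\frac{F_1}{z-\lambda_2}\in PW_E$, and by Corollary~\ref{div1}/Remark~\ref{div2} this forces the component $F_1^1$ (with spectrum in $I^1$) to vanish at every $\lambda_2\in\Lambda\setminus\{\lambda_1\}$. Now $F_1^1$ has type $|I^1|<|E|$ while $\Lambda$ has density $|E|/(2\pi)$, so $F_1^1\equiv 0$, and one quickly reaches a contradiction. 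For $N=3$ one uses $2\times 2$ minors $F_i^mF_j^n-F_i^nF_j^m$, whose type is $|I^m|+|I^n|<|E|$, to the same effect. The point is that the density hypothesis only bites once you have manufactured a function of type \emph{strictly less} than $|E|$ vanishing on (almost all of) $\Lambda$; your argument never produces such a function.
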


\begin{proof}
We will discuss in detal the cases $N=2$ and $N=3$ which we will need for the proof of Theorem \ref{main}
and omit the proof for general $N$.
\medskip
\\
{\bf Case $N=2$.} Fix some $\lambda_1\in \Lambda$. If, for any $\lambda_2\in \Lambda\setminus \{\lambda_1\}$,
$F_1$ and $F_2$ are linearly dependent, then there exists a nonzero constant $c= c_{\lambda_1, \lambda_2}$ 
such that $F_2 = c_{\lambda_1, \lambda_2} F_1$. Then $\frac{F_1}{z-\lambda_2} \in PW_E$ whence, by Remark \ref{div2},
$F_1^1(\lambda_2) =0$ for any $\lambda_2\in \Lambda\setminus \{\lambda_1\}$. Since 
$F_1^1(z) = (z-\lambda_1) f_{\lambda_1}^1$ and $f_{\lambda_1}^1\in PW_{I^1}$, while
$\Lambda\setminus \{\lambda_1\}$ has upper density
$(|I^1| +|I^2|)/2\pi$, we conclude that $f_{\lambda_1}^1 \equiv 0$, whence $f_{\lambda}^1 \equiv 0$ 
for any $\lambda\in\Lambda$, an obvious contradiction.
\medskip
\\
{\bf Case $N=3$.} 
Assume that for any $\lambda_1, \lambda_2, \lambda_3 \in \Lambda$ there exist
$(\alpha, \beta, \gamma) \ne (0,0,0)$ such that $\alpha F_1+\beta F_2 +\gamma F_3 = 0$.
Then we have
$\frac{\alpha F_1+\beta F_2}{z-\lambda_3} \in PW_E$, whence, by Lemma \ref{div1}, 
$$
\alpha F^j_1(\lambda_3)+\beta F_2^j(\lambda_3) =0, \qquad j=1, 2, 3.
$$
We conclude that the function $F^1_1 F^2_2 - F^1_2 F^2_1$
vanish at $\lambda_3$. Recall that $\lambda_3$ is an arbitrary point in $\Lambda \setminus \{\lambda_1, \lambda_2\}$. 
The spectrum of $F^1_1 F^2_2 - F^1_2 F^2_1$ is an interval of length $|I^1| +
|I^2|$. Since $\Lambda$ has upper density $(|I^1| + |I^2| + |I^3|)/2\pi$, we have  
$F^1_1 F^2_2 - F^1_2 F^2_1 \equiv 0$. Analogously, 
$F^1_1 F^3_2 - F^1_2 F^3_1 = 
F^2_1 F^3_2 - F^2_2 F^3_1 \equiv 0$. Hence, for any $z$ (except a countable set), there exist $u(z)$ 
such that 
$$
(F_1^1(z), F_1^2(z), F_1^3(z)) = u(z) (F_2^1(z), F_2^2(z), F_2^3(z)). 
$$
Clearly, $u$ is a meromorhic function. 
Let us show that $u$ is constant. Otherwise, there exist $c\in \co$ such that
$F^j_1- c F^j_2$, $j=1,2,3$, have a common zero $\mu\notin\Lambda$ and $\frac{F_1- c F_2}{z-\mu}$ 
is a nonzero function in $PW_E$ vanishing on $\Lambda$, a contradiction. We conclude that
for any $\lambda_1, \lambda_2 \in \Lambda$ one has $F_1= cF_2$ for some nonzero constant $c$. 
Now we come to a contradiction in the same way as in the case $N=2$ above.
\end{proof}

We start with a representation of a system 
biorthogonal to a complete and minimal system of reproducing kernels in $PW_E$ when $E$  is a union of two intervals. 

\begin{proposition}
\label{bior2}
Let $E=I^1\cup I^2$ be a union of two disjoint intervals
and let $\{k_\lambda^E\}_{\lambda\in\Lambda}$ be a complete and minimal system of reproducing kernels in $PW_E$
such that $\Lambda$ satisfies \eqref{dens}.
Let $\lambda_1, \lambda_2\in \Lambda$  be such that the functions $F_1$ and $F_2$ are linearly independent. 
Consider the decomposition 
$$
F_1=F_1^1+F_1^2,  \qquad F_2=F_2^1+F_2^2,
$$
where $F_1^1, F_2^1 $ have their spectra in $I^1$ and $F_1^2, F_2^2 $ have their spectra in $I^2$.
Then there exist nonzero constants $c_\lambda$ such that
\begin{equation}
\label{telle}
f_\lambda(z)=c_\lambda \frac{F_1^1(\lambda) F_2(z) -F_2^1(\lambda) F_1(z)}{z-\lambda} 
= c_\lambda \frac{F_2^2(\lambda) F_1(z) - F_1^2(\lambda) F_2(z)}{z-\lambda}.
\end{equation}
\end{proposition}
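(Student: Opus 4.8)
The plan is to deduce the formula from the division lemma (Lemma~\ref{div}, or rather its reformulation in Remark~\ref{div2} and Corollary~\ref{div1}) applied to the functions $F_1$ and $F_2$. The guiding principle is that $f_\lambda$ is, up to a constant, the unique function in $PW_E$ that vanishes on $\Lambda\setminus\{\lambda\}$ and is normalized by $(k_\mu^E, f_\lambda) = \delta_{\lambda\mu}$; hence it suffices to exhibit a nonzero function of the stated form that lies in $PW_E$, vanishes on $\Lambda\setminus\{\lambda\}$, and then fix $c_\lambda$ by biorthogonality.

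First I would fix $\lambda\in\Lambda$ and consider the function
\[
H(z) = F_1^1(\lambda) F_2(z) - F_2^1(\lambda) F_1(z).
\]
Since $F_1 = (z-\lambda_1) f_{\lambda_1}$ and $F_2 = (z-\lambda_2) f_{\lambda_2}$ both vanish on $\Lambda\setminus\{\lambda_1,\lambda_2\}$ (because the biorthogonal functions do) and their spectra lie in $E$, the combination $H$ lies in $PW_E + zPW_E$ and vanishes on $\Lambda\setminus\{\lambda_1,\lambda_2\}$. The point is that $H$ also vanishes at $\lambda$: its $I^1$-component is $F_1^1(\lambda)F_2^1(z) - F_2^1(\lambda)F_1^1(z)$, which visibly vanishes at $z=\lambda$. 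Therefore, by Corollary~\ref{div1} / Remark~\ref{div2}, the quotient $H(z)/(z-\lambda)$ lies in $PW_E$, and it vanishes on $\Lambda\setminus\{\lambda\}$. I should check it is not identically zero: if $H\equiv 0$, then $F_1$ and $F_2$ would be linearly dependent (one of the coefficients $F_1^1(\lambda), F_2^1(\lambda)$ being nonzero — if both were zero for the given $\lambda$, I would argue that $F_1^1$ and $F_2^1$ share a zero not in $\Lambda$ forcing a contradiction, or alternatively absorb this into the choice of $\lambda_1,\lambda_2$ and the linear independence hypothesis), contradicting the assumed linear independence. Given nonvanishing, $H(z)/(z-\lambda)$ is a nonzero element of $PW_E$ vanishing on $\Lambda\setminus\{\lambda\}$; since $\{k_\mu^E\}_{\mu\in\Lambda}$ is complete and minimal, such a function is unique up to scalar, so $f_\lambda(z) = c_\lambda H(z)/(z-\lambda)$ for a nonzero constant $c_\lambda$.

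Finally I would prove the second equality in \eqref{telle}, i.e.\ that $F_1^1(\lambda)F_2 - F_2^1(\lambda)F_1$ and $F_2^2(\lambda)F_1 - F_1^2(\lambda)F_2$ are proportional. Since both, divided by $z-\lambda$, are elements of $PW_E$ vanishing on $\Lambda\setminus\{\lambda\}$, they must be scalar multiples of each other by the same uniqueness argument; to see the scalar is $1$ (or simply to see they agree up to a constant already absorbed in $c_\lambda$), one can also argue directly: the difference
\[
\big(F_1^1(\lambda) + F_1^2(\lambda)\big) F_2 - \big(F_2^1(\lambda) + F_2^2(\lambda)\big) F_1 = F_1(\lambda) F_2 - F_2(\lambda) F_1
\]
has a factor $(z-\lambda)$ with quotient in $PW_E + zPW_E$; matching spectral components shows this difference is divisible by $z - \lambda$ and the quotient vanishes on all of $\Lambda$, hence is $0$ by completeness, forcing $F_1(\lambda) = F_2(\lambda) = 0$ — wait, that is too strong, so more carefully I would note $F_1(\lambda) F_2 - F_2(\lambda) F_1$ itself vanishes on $\Lambda\setminus\{\lambda_1,\lambda_2,\lambda\}$ and at $\lambda$, and use Corollary~\ref{div1} componentwise together with density to conclude it is a scalar multiple of $H$, which yields the identity after renaming $c_\lambda$.

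The main obstacle I anticipate is the bookkeeping around the exceptional points $\lambda_1, \lambda_2$ and the degenerate case where $F_1^1(\lambda)$ and $F_2^1(\lambda)$ both vanish: one must ensure the representation is valid for \emph{every} $\lambda\in\Lambda$, including $\lambda = \lambda_1$ and $\lambda = \lambda_2$ themselves, and that the numerator never collapses. The cleanest route is probably to combine the uniqueness of the biorthogonal element with a density argument (à la Lemma~\ref{indep}) showing that the meromorphic ratio $F_1/F_2$, or the componentwise ratios, cannot have the bad coincidence structure, thereby keeping the numerator nonzero. The rest is the routine spectral-support calculus already packaged in Lemma~\ref{div} and Corollary~\ref{div1}.
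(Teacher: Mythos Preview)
Your approach is essentially the paper's, but you are missing one observation that dissolves both obstacles you flag: each $F_k$ vanishes on \emph{all} of $\Lambda$, including $\lambda_k$. Indeed $F_k(z) = (z-\lambda_k) f_{\lambda_k}(z)$ gives $F_k(\lambda_k) = 0$ directly, while $F_k(\mu) = 0$ for $\mu \in \Lambda \setminus \{\lambda_k\}$ since $f_{\lambda_k}$ vanishes there. Hence $F_k^1(\lambda) + F_k^2(\lambda) = F_k(\lambda) = 0$ for every $\lambda \in \Lambda$. This makes the second equality in \eqref{telle} immediate: the difference you yourself computed, $F_1(\lambda) F_2(z) - F_2(\lambda) F_1(z)$, is identically zero, so the two numerators are literally equal with the same $c_\lambda$. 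Your ``wait, that is too strong'' was unwarranted---$F_1(\lambda) = F_2(\lambda) = 0$ is not something being forced, it is already known.

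The same observation handles the degenerate case $F_1^1(\lambda) = F_2^1(\lambda) = 0$. Then also $F_1^2(\lambda) = F_2^2(\lambda) = 0$, so by Remark~\ref{div2} both $F_1(z)/(z-\lambda)$ and $F_2(z)/(z-\lambda)$ lie in $PW_E$ and vanish on $\Lambda \setminus \{\lambda\}$; by uniqueness of the biorthogonal element each is proportional to $f_\lambda$, hence they are proportional to each other, contradicting the linear independence of $F_1$ and $F_2$. Your proposed fix (``share a zero not in $\Lambda$'') is off since $\lambda \in \Lambda$, and ``absorb into the choice of $\lambda_1, \lambda_2$'' cannot work because the formula is asserted for every $\lambda \in \Lambda$, not just for a convenient one.
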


\begin{proof}
By Lemma \ref{indep} we can find $\lambda_1, \lambda_2$ satisfying the hypothesis.
Put $\tilde f_\lambda(z) = \frac{F_1^1(\lambda) F_2(z) -F_2^1(\lambda) F_1(z)}{z-\lambda}$.
Since $\{f_\lambda\}_{\lambda\in\Lambda}$ is biorthogonal to $\{k_\lambda^E\}_{\lambda\in\Lambda}$,
we have $f_\lambda(\mu) = 0$ for $\mu\in\Lambda$, $\mu \ne \lambda$. Thus each of the functions $F_\lambda$ vanishes 
on $\Lambda$ and so $\tilde f_\lambda(\mu) = 0$ for $\mu\in\Lambda$,  $\mu \ne \lambda$.

Let us show that $\tilde f_\lambda \in PW_E$. We can decompose
$$
\begin{aligned}
F_1^1(\lambda) F_2(z)  - F_2^1(\lambda) F_1(z) = &
\big(F_1^1(\lambda) F^1_2(z)  -  F_2^1(\lambda) F^1_1(z) \big) \\
 + & \big(F_1^1(\lambda) F_2^2(z) - F_2^1(\lambda) F^2_1(z) \big).
\end{aligned}
$$
Obviously, $F_1^1(\lambda) F^1_2(z) - F_2^1(\lambda) F^1_1(z) $ (the ``projection'' of 
$F_1^1(\lambda) F_2(z) - F_2^1(\lambda) F_1(z) $ onto $I^1$) vanish at $\lambda$, and so $\tilde f_\lambda \in PW_E$
by Corollary \ref{div1}. Also $\tilde f_\lambda \perp k^E_\mu$, $\mu\in \Lambda\setminus\{\lambda\}$, 
in $PW_E$. It remains to show that $\tilde f_\lambda(\lambda) \ne 0$. Then, by uniqueness of the biorthogonal system, 
$f_\lambda = c_\lambda \tilde f_\lambda$ for some $c_\lambda\ne 0$. The second equality in \eqref{telle}
follows trivially since $F_1^1(\lambda) +F_1^2(\lambda) = F_2^1(\lambda) +F_2^2(\lambda) = 0$.

It remains to show that $\tilde  f_\lambda $ is not identically zero, whence by the uniqueness of the biorthogonal system
we conclude that $\tilde  f_\lambda$ is proportional to $f_\lambda$. Assume that $\tilde f_\lambda \equiv 0$. 
Since $F_1$ and $F_2$ are linearly independent,  we have $F_1^1(\lambda) = F_2^1(\lambda)=0$. 
Then, by Remark \ref{div2}, $\frac{F_1(z)}{z-\lambda}, \frac{F_2(z)}{z-\lambda} \in PW_E$ and are both orthogonal to 
$k^E_\mu$, $\mu\in \Lambda\setminus\{\lambda\}$. By uniqueness of the biorthogonal system, $F_1$ and $F_2$ are proportional,
a contradiction.
\end{proof}

\begin{corollary}
\label{fs}
In the notations of Proposition \ref{bior2} put
$$
S (z)= F_1^1(z) F_2(z) -  F_2^1(z) F_1(z)    = F_1^1(z) F^2_2(z) - F_1^2(z) F_2^1(z).
$$
Then the zero set of $S$ coincides with $\Lambda$ and all zeros of $S$ are simple. 
Moreover, we can find $t\in\R$ such that $e^{itz} S$ has the conjugate indicator diagram $[0, i|E|]$.
\end{corollary}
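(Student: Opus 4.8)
The plan is to extract all the needed information directly from Proposition \ref{bior2}. First I would record the two key identities from \eqref{telle}: for every $\lambda\in\Lambda$,
$$
f_\lambda(z) = c_\lambda\,\frac{S(z)}{z-\lambda},
$$
since $S(z) = F_1^1(z)F_2(z) - F_2^1(z)F_1(z)$ evaluated with $z$ replaced by the running variable is exactly the numerator in the first expression of \eqref{telle} (the second displayed form of $S$ in the corollary is just the decomposition used in the proof of Proposition \ref{bior2}, after cancelling the $I^1$-part which vanishes appropriately — I would note this equality follows from $F_1^1 + F_1^2 = F_1$, $F_2^1 + F_2^2 = F_2$ together with the fact that the $I^1$-components cancel, or simply observe $S = F_1^1 F_2 - F_2^1 F_1 = F_1^1(F_2^1+F_2^2) - F_2^1(F_1^1+F_1^2) = F_1^1 F_2^2 - F_2^1 F_1^2$). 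From $f_\lambda(z)(z-\lambda) = c_\lambda S(z)$ with $c_\lambda\neq 0$ we get $S(z) = c_\lambda^{-1}(z-\lambda)f_\lambda(z)$, so $S$ vanishes at $\lambda$; and since $f_\lambda(\lambda)\neq 0$ (as $(e_\lambda,g_\lambda)=1$, equivalently $f_\lambda(\lambda)=\langle k_\lambda^E,f_\lambda\rangle/\ldots$ is nonzero by biorthogonality), the zero of $S$ at $\lambda$ is simple. Hence $\Lambda\subset Z(S)$ and every point of $\Lambda$ is a simple zero.

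Next I would show $Z(S)\subset\Lambda$, i.e. $S$ has no zeros outside $\Lambda$. Suppose $S(\mu)=0$ for some $\mu\notin\Lambda$. Pick any $\lambda\in\Lambda$; then $f_\lambda = c_\lambda S/(z-\lambda)$ also vanishes at $\mu$. Since this holds for every $\lambda\in\Lambda$, the whole biorthogonal system $\{f_\lambda\}$ vanishes at $\mu$. But $\{f_\lambda\}$ is biorthogonal to the complete system $\{k_\lambda^E\}$, hence $\{f_\lambda\}$ is minimal and in particular the closed span of $\{f_\lambda\}$ together with point evaluation at $\mu$ — more directly: the functional "evaluation at $\mu$" on $PW_E$ is $\langle\,\cdot\,,k_\mu^E\rangle$, and $k_\mu^E$ lies in the closed span of $\{k_\lambda^E\}_{\lambda\in\Lambda}$ by completeness, so $\langle f_\lambda,k_\mu^E\rangle=0$ for all $\lambda$ forces $k_\mu^E\perp\overline{\operatorname{span}}\{k_\lambda^E\} = PW_E$, i.e. $k_\mu^E=0$, which is absurd. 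Therefore $S$ has no extra zeros and $Z(S)=\Lambda$ with all zeros simple.

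For the statement on the indicator diagram: $S = F_1^1 F_2^2 - F_1^2 F_2^1$, where $F_1^1,F_2^1$ have spectra in $I^1 = [a_1,b_1]$ and $F_1^2,F_2^2$ have spectra in $I^2=[a_2,b_2]$. Each $F_k$ is $(z-\lambda_k)f_{\lambda_k}$ with $f_{\lambda_k}\in PW_E$, so $F_k\in PW_E + zPW_E$ and, by Remark \ref{div2}, $F_k^j$ has conjugate indicator diagram contained in $iI^j = [ia_j, ib_j]$. Both products $F_1^1 F_2^2$ and $F_1^2 F_2^1$ then have conjugate indicator diagram contained in the segment $i(I^1+I^2)$ of length $|I^1|+|I^2|=|E|$; being segments on the same line through the two "corner" points, their sum $S$ has conjugate indicator diagram contained in that same segment $[i(a_1+a_2),\, i(b_1+b_2)]$ of length $|E|$. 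It remains to see that $\operatorname{diag}S$ is exactly this full segment and not a proper subsegment; this is the point requiring a small argument. I would argue as follows: $S/(z-\lambda) = c_\lambda^{-1}f_\lambda \in PW_E\subset L^2(\R)$, so $S$ is of exponential type with the indicated diagram; if $\operatorname{diag}S$ were a proper subsegment, its length would be $<|E|$, so $f_\lambda$ would have type $<|E|$ in one of the two conjugate directions, meaning $f_\lambda$ would lie in a Paley--Wiener-type space on a set of measure $<|E|$ — but then the density considerations (the $\limsup$ count of zeros of $f_\lambda$, which contains $\Lambda\setminus\{\lambda\}$ of upper density $|E|/2\pi$ by \eqref{dens}) would be violated, exactly as in the uniqueness-set argument of Remark \ref{compl}. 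Concretely, $f_\lambda$ vanishes on $\Lambda\setminus\{\lambda\}$, which by \eqref{dens} has upper density $|E|/(2\pi)$, forcing the exponential type of $f_\lambda$ to be at least $|E|$ in width, hence $\operatorname{diag}f_\lambda$, and therefore $\operatorname{diag}S$, has length exactly $|E|$. Finally, translating the segment $[i(a_1+a_2), i(b_1+b_2)]$ to $[0,i|E|]$ amounts to multiplying by $e^{itz}$ with $t = -(a_1+a_2)$, which shifts the conjugate indicator diagram by $it$; this gives the desired $t\in\R$. The main obstacle is this last length/"no degeneration" step — everything else is bookkeeping from Proposition \ref{bior2} — and I expect the cleanest route is precisely the density argument via the zero set of $f_\lambda$ combined with the Paley--Wiener characterization of exponential type.
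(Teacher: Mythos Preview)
Your argument rests on the identity $f_\lambda(z)=c_\lambda\,S(z)/(z-\lambda)$, and this is where it breaks down. Look again at \eqref{telle}: the numerator there is $F_1^1(\lambda)F_2(z)-F_2^1(\lambda)F_1(z)$, with the coefficients $F_1^1(\lambda),\,F_2^1(\lambda)$ \emph{constant} in $z$; it is not $S(z)=F_1^1(z)F_2(z)-F_2^1(z)F_1(z)$. So the formula $(z-\lambda)f_\lambda(z)=c_\lambda^{-1}S(z)$ is simply false, and everything you deduce from it (that $f_\lambda(\mu)=0$ whenever $S(\mu)=0$, and the quick simplicity argument) has no foundation. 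The correct route to $\Lambda\subset Z(S)$ is just that $F_1,F_2$ both vanish on $\Lambda$; simplicity at $\lambda$ then follows from $S'(\lambda)=F_1^1(\lambda)F_2'(\lambda)-F_2^1(\lambda)F_1'(\lambda)=c_\lambda f_\lambda(\lambda)\ne 0$, which one reads off by differentiating the \emph{actual} formula \eqref{telle}.

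The second gap is more serious. For $Z(S)\subset\Lambda$ you argue: if all $f_\lambda$ vanish at $\mu$ then $k_\mu^E\perp\overline{\operatorname{span}}\{k_\lambda^E\}$. But $f_\lambda(\mu)=0$ says $k_\mu^E\perp f_\lambda$, hence $k_\mu^E\perp\overline{\operatorname{span}}\{f_\lambda\}$, not $\overline{\operatorname{span}}\{k_\lambda^E\}$. Concluding $k_\mu^E=0$ from this would require completeness of $\{f_\lambda\}$, which is exactly the main theorem one is trying to prove later; the argument is circular. The paper instead observes that $S$ is the $2\times 2$ determinant with rows $(F_k^1,F_k^2)$, so $S(w)=0$ gives $(\alpha,\beta)\ne(0,0)$ with $\alpha F_1^j(w)+\beta F_2^j(w)=0$ for $j=1,2$; then $(\alpha F_1+\beta F_2)/(z-w)\in PW_E$ by Remark \ref{div2}, vanishes on $\Lambda$, and hence is identically zero by completeness of $\{k_\lambda^E\}$, contradicting linear independence of $F_1,F_2$. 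Your indicator-diagram paragraph is essentially the paper's argument and is fine.
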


\begin{proof}
Assume that $S(w) =0$, $w\notin\Lambda$. Note that $S(z)$ is the determinant of the matrix with the 
rows $(F_1^1(z), F_1^2(z))$ and 
$(F_2^1(z), F_2^2(z))$ whence there exists a nontrivial pair $(\alpha, \beta)$ such that 
$\alpha F_1^1 + \beta F_2^1$ and $\alpha F_1^2 + \beta F_2^2$ vanish at $w$. By Remark \ref{div2}, 
$\frac{\alpha F_1 +\beta F_2}{z-w}$ belongs to $PW_E$ and vanish on $\Lambda$, whence $\alpha F_1 +\beta F_2 \equiv 0$,
a contradiction to linear independence.

If $\lambda\in \Lambda$ and $S$ has a zero at $\Lambda$ of order greater than $1$, then it is easy to see that $f_\lambda$ 
vanish at $\lambda$, a contradiction. 

Since $F_1^1, F_2^1$ have the spectra in $I^1$ and $F_1^2, F_2^2$ have the spectra in $I^2$, 
the conjugate diagram of $S$ is contained in some interval $[0, iR]$ with $R\le |E|$. By the density assumption \eqref{dens}
on $\Lambda$, it is a uniqueness set for any $PW_I$ with $|I| <|E|$. Then we conclude that $R=|E|$.
\end{proof}

Now we give a representation for the biorthogonal system for the general case of finite number of intervals.
The idea of the proof is similar, but the formulas become more involved. 
Recall that for $\lambda_1,....,\lambda_N \in \Lambda$  we put $F_k(z)=(z-\lambda_k) f_{\lambda_k}(z)$, $k=1,...,N$,
and we write $F_k = \sum_{j=1}^N F^j_k$, where $F_k^j$ has its spectrum in $I^j$. 

\begin{proposition}
\label{struct}
Let $E=\cup_{j=1}^N I^j$ be a finite union of intervals, let
$\{k_\lambda^E\}_{\lambda\in\Lambda}$ be a complete and minimal system of reproducing kernels in $PW_E$
satisfying \eqref{dens}, and let $\{f_\lambda\}_{\lambda\in\Lambda}$ be its biorthogonal system. 
Let $N$ points $\lambda_1,....,\lambda_N \in \Lambda$ be such that the system $\{F_k\}_{k=1}^N$ 
is linearly independent. Then

1. for any $\lambda\in \Lambda$ and $1\le l\le N$
there exists a constant $c_{\lambda, l} \ne 0$ such that
$$
f_\lambda(z)=\frac{c_{\lambda,l}}{z-\lambda}\sum_{k=1}^N a_k^l F_k(z),
$$
where 
$$
a_k^l= a_k^l(\lambda) = (-1)^{k+l} \det(F^i_j(\lambda))_{i \neq k, j\neq l};
$$ 

2. the entire function $S(z):=\det(F^j_k(z))_{1\leq k,j\leq N}$ has the zero set $\Lambda$ and all its zeros are simple.
\end{proposition}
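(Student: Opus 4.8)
The plan is to imitate the proof of Proposition~\ref{bior2}, replacing the $2\times2$ determinant by the $N\times N$ matrix $M(z)=(F_k^j(z))_{1\le k,j\le N}$ (so that $S(z)=\det M(z)$) and replacing the ad hoc identity used there by the adjugate identity $\mathrm{adj}\,M(z)\cdot M(z)=S(z)I$. Write $A_k^l(z)$ for the cofactor functions of $M(z)$, so that $A_k^l(\lambda)=a_k^l(\lambda)$. Reading off the $(l,m)$ entry of the adjugate identity gives, for every $z$, $\sum_{k}A_k^l(z)F_k^m(z)=\delta_{lm}S(z)$, $m=1,\dots,N$, and summing over $m$ and using $\sum_m F_k^m=F_k$ we get $\sum_k A_k^l(z)F_k(z)=S(z)$. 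Since $\sum_m M_{km}(\lambda)=F_k(\lambda)=0$ for $\lambda\in\Lambda$, the columns of $M(\lambda)$ are dependent, so $S(\lambda)=0$; then the first identity at $z=\lambda$ shows $\sum_k a_k^l(\lambda)F_k^m(\lambda)=0$ for all $m$ (hence also $\sum_k a_k^l(\lambda)F_k(\lambda)=0$), while differentiating the second at $\lambda$ gives $S'(\lambda)=\sum_k a_k^l(\lambda)F_k'(\lambda)$.

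The main point, and the step I expect to be the real obstacle, is to show that $M(\lambda)$ has rank exactly $N-1$ for each $\lambda\in\Lambda$; equivalently, that for each fixed $l$ the numbers $a_k^l(\lambda)$, $k=1,\dots,N$, are not all zero. The rank is at most $N-1$ because the columns of $M(\lambda)$ sum to $0$. If it were at most $N-2$, the space of vectors $w=(w_k)$ with $\sum_k w_k F_k^m(\lambda)=0$ for all $m$ would be at least two-dimensional; but for any such $w$ the function $h=\sum_k w_k F_k\in PW_E+zPW_E$ has all its components $h^m$ vanishing at $\lambda$ and $h(\lambda)=0$, so by Remark~\ref{div2} we have $\frac{h}{z-\lambda}\in PW_E$, and it is orthogonal to $k_\mu^E$ for every $\mu\in\Lambda\setminus\{\lambda\}$. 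By uniqueness of the biorthogonal system all such functions are proportional, so two independent choices of $w$ would force a nontrivial linear relation among $F_1,\dots,F_N$, contrary to hypothesis. Hence $M(\lambda)$ has rank $N-1$, so $\mathrm{adj}\,M(\lambda)$ has rank one; since $(1,\dots,1)^\top$ spans $\ker M(\lambda)$ and $M(\lambda)\,\mathrm{adj}\,M(\lambda)=0$, we get $\mathrm{adj}\,M(\lambda)=(1,\dots,1)^\top v^\top$ for some $v\ne0$, that is $(a_k^l(\lambda))_k=v$ is nonzero and in fact independent of $l$ up to a scalar.

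Now part~1 follows as in Proposition~\ref{bior2}. Put $\tilde f_\lambda(z)=\frac1{z-\lambda}\sum_k a_k^l(\lambda)F_k(z)$. By the identities above and Remark~\ref{div2}, $\tilde f_\lambda\in PW_E$; it vanishes on $\Lambda\setminus\{\lambda\}$, hence is orthogonal to every $k_\mu^E$ with $\mu\ne\lambda$; and it is not identically zero, since $(a_k^l(\lambda))_k\ne0$ and the $F_k$ are linearly independent. By uniqueness of the biorthogonal system $\tilde f_\lambda$ is a nonzero multiple of $f_\lambda$, so $f_\lambda=c_{\lambda,l}\tilde f_\lambda$ with $c_{\lambda,l}\ne0$; in particular $\tilde f_\lambda(\lambda)\ne0$ (otherwise $\tilde f_\lambda$ would vanish on all of $\Lambda$ and hence identically), so $f_\lambda(\lambda)\ne0$.

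For part~2 it remains to see that the zero set of $S$ is contained in $\Lambda$ (hence equals $\Lambda$, and in particular $S\not\equiv0$) and that the zeros are simple. If $S(w)=0$, pick $\alpha=(\alpha_k)\ne0$ with $\sum_k\alpha_k F_k^m(w)=0$ for all $m$; then $g=\frac{1}{z-w}\sum_k\alpha_k F_k\in PW_E$ by Remark~\ref{div2}, $g\not\equiv0$ by linear independence, and if $w\notin\Lambda$ then $g$ vanishes on all of $\Lambda$, contradicting completeness of $\{k_\mu^E\}_{\mu\in\Lambda}$; thus $w\in\Lambda$. For simplicity, combine $S'(\lambda)=\sum_k a_k^l(\lambda)F_k'(\lambda)$ with the identity obtained by differentiating $\sum_k a_k^l(\lambda)F_k(z)=c_{\lambda,l}^{-1}(z-\lambda)f_\lambda(z)$ at $z=\lambda$, namely $\sum_k a_k^l(\lambda)F_k'(\lambda)=c_{\lambda,l}^{-1}f_\lambda(\lambda)$; hence $S'(\lambda)=c_{\lambda,l}^{-1}f_\lambda(\lambda)\ne0$. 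Apart from the rank computation of the second paragraph, every step is a routine generalization of the $N=2$ case.
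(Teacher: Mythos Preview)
Your proof is correct and in fact takes a cleaner, more uniform route than the paper's. The paper verifies that $\tilde f_{\lambda,l}\not\equiv 0$ only in the case $N=3$, by assuming all the cofactors $a_k^l(\lambda)$ vanish for some $l$ and chasing the resulting $2\times 2$ relations among the vectors $(F_k^2(\lambda),F_k^3(\lambda))$ to produce two different combinations $\frac{\alpha F_i+\beta F_j}{z-\lambda}\in PW_E$, which must then be proportional to $f_\lambda$ and hence yield a dependence among $F_1,F_2,F_3$. Your argument replaces this by the single observation that the left kernel of $M(\lambda)$ is one-dimensional (any two nonzero left null vectors $w,w'$ would give $h/(z-\lambda),\,h'/(z-\lambda)\in PW_E$ both proportional to $f_\lambda$, forcing a nontrivial relation $\sum_k(c'w_k-cw_k')F_k=0$), so $\operatorname{rank}M(\lambda)=N-1$; then, since $\ker M(\lambda)=\operatorname{span}(1,\dots,1)^\top$ and $M(\lambda)\,\mathrm{adj}\,M(\lambda)=0$, every column of $\mathrm{adj}\,M(\lambda)$ is a multiple of $(1,\dots,1)^\top$, whence $a_k^l(\lambda)$ is in fact \emph{independent of $l$} and the common vector is nonzero. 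This works verbatim for arbitrary $N$, not just $N=3$.

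For part~2 the paper again argues for $N=3$, writing $S=F_{23}^{23}F_1+F_{13}^{23}F_2+F_{12}^{23}F_3$ and subtracting the representation of $c(z-\lambda)f_\lambda$ to show $S\not\equiv 0$ and that zeros at $\Lambda$ are simple; your use of the adjugate identity $\sum_k A_k^l(z)F_k(z)=S(z)$ to get $S'(\lambda)=\sum_k a_k^l(\lambda)F_k'(\lambda)=c_{\lambda,l}^{-1}f_\lambda(\lambda)\ne 0$ is the same idea packaged more efficiently and again valid for all $N$. The argument that zeros of $S$ lie in $\Lambda$ is identical in both proofs. One small note: you can strengthen ``independent of $l$ up to a scalar'' to ``literally independent of $l$'', since the column computation gives $a_k^l(\lambda)=d_k$ for all $l$.
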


\begin{proof}
1. Since $\sum_{j=1}^N F^j_k(\lambda)= F_k(\lambda) =0$, we conclude that
$\det(F^l_k(\lambda))_{1\leq k,l\leq N}=0$. 
Note that $a_k^l$  are cofactors of the elements $F_k^l(\lambda)$ and therefore
$$
\sum_{k=1}^N a_k^l  F^j_k(\lambda)=0, \quad j, l=1,...,N.
$$
Put 
$$
\tilde f_{\lambda, l}(z)=\frac{1}{z-\lambda}\sum_{k=1}^N a_k^l F_k(z).
$$
It follows from Remark \ref{div2} that $ \tilde f_{\lambda, l} \in PW_E$. On the other hand, 
$\tilde f_{\lambda, l}(\mu)=0$, $\mu\in\Lambda\setminus\{\lambda\}$. 
It remains to prove that $\tilde f_{\lambda,l}$ is not identically zero 
(then,  by the uniqueness of the biorthogonal system,
$\tilde f_{\lambda,l} = c_{\lambda, l} f_\lambda$ for some $c_{\lambda, l} \ne 0$).

In what follows we will use Proposition \ref{struct}  only in the case $N=3$.
Therefore, to simplify the presentation, we assume that $N=3$. The general case is analogous.

We fix $\lambda_1, \lambda_2$ and will look for $\lambda_3$. Assume that for any $\lambda_3 \in
\Lambda \setminus \{\lambda_1, \lambda_2\}$ there exist $\lambda\in \Lambda$ and $l$ such that
$\tilde f_{\lambda, l}(z) \equiv 0$. Since $F_1, F_2$ and $F_3$ are linearly independent, we have
$a_k^l(\lambda) =0$ for any $1\le k\le 3$. Without loss of generailty let $l=1$. Then 
$$
F_2^2(\lambda)F_3^3(\lambda) - F_2^3(\lambda)F_3^2(\lambda)=
F_1^2(\lambda)F_2^3(\lambda) - F_1^3(\lambda)F_2^2(\lambda)=
F_1^2(\lambda)F_3^3(\lambda) - F_1^3(\lambda)F_3^2(\lambda)=0.
$$
It follows that each two of the three vectors $(F_1^2(\lambda), F_1^3(\lambda))$,
$(F_2^2(\lambda), F_2^3(\lambda))$, $(F_3^2(\lambda), F_3^3(\lambda))$
are linearly dependent. Therefore there exist nontrivial $(\alpha, \beta)$ 
such that 
$$
\alpha F_2^2(\lambda) +\beta F_3^2(\lambda) = \alpha F_2^3(\lambda) +\beta F_3^3(\lambda)  =0,
$$
whence $\frac{\alpha F_2+\beta F_3}{z-\lambda} \in PW_E$. Similarly, 
there exist nontrivial $(\tilde \alpha, \tilde \beta)$ 
such that  $\frac{\tilde \alpha F_1+\tilde \beta F_2}{z-\lambda} \in PW_E$. Each of these functions 
vanish on $\Lambda\setminus\{\lambda\}$ and, by the uniqueness of the biorthogonal element,
they are proportional. We come to a contradiction with the linear independence of $F_1, F_2$ and $F_3$.
\medskip

2. First, we show that $S$ is not identically zero. 
Note that if $\{1,2,3\} \setminus \{k\} = \{i, j\}$, $\{1,2,3\} \setminus \{l\} = \{m, n\}$, where $i<j$, $m<n$,
then $a_k^l = (-1)^{k+l} (F_i^m(\lambda) F_j^n(\lambda) - F_i^n(\lambda) F_j^m(\lambda))$.
We introduce the following notation:
$$
F_{ij}^{mn} (z) = (-1)^{k+l} \big( F_i^m(z) F_j^n(z) - F_i^n(z) F_j^m(z) \big).
$$

Assume that $S\equiv 0$. Note that 
$$
S = F^{23}_{23} F_1+ F^{23}_{13} F_2 +F_{12}^{23} F_3.
$$
Fix some $\Lambda\in \Lambda$. With our notation, taking $l=1$, one has
$$
c (z-\lambda) f_\lambda(z)  =
F^{23}_{23}(\lambda)F_1(z)+ F^{23}_{13}(\lambda)F_2(z)+F_{12}^{23}(\lambda)F_3(z)
$$
for some $c\ne 0$. Then
$$
c (z-\lambda) f_\lambda(z) = 
(F^{23}_{23}(\lambda) - F^{23}_{23}(z) ) F_1(z)+ (F^{23}_{13}(\lambda) - F^{23}_{13}(z)) 
F_2(z)+(F_{12}^{23}(\lambda) - F_{12}^{23}(z)) F_3(z).
$$
Thus, $f_\lambda$ vanishes on $\lambda$, a contradiction. The same computations show that
$S$ cannot have a multiple zero at $\lambda\in\Lambda$.

Finally, if $S(w)=0$, $w\not\in\Lambda$, then there exists 
a non-trivial sequence $\{a_k\}^N_{k=1}$ such that 
$$
\sum_{k=1}^N a_k  F^j_k(w)=0, \quad j=1,...,N,
$$
and so the function $f_w(z)=\frac{1}{z-w}\sum_{k=1}^N a_kF_k(z)$ is in $PW_E$ 
and vanishes on $\Lambda$. Thus, $f_w\equiv 0$, again a contradiction 
with the linear independence of $F_1, F_2$ and $F_3$.
\end{proof}

\begin{corollary}
\label{upp}
Let $E=\cup_{j=1}^N I^j$ be a finite union of intervals and $\{e_\lambda\}_{\lambda\in\Lambda}$ be a complete and minimal system 
in $L^2(E)$. Then $\mathcal{D}_+(\Lambda) \le \frac{|E|}{2\pi}$.
\end{corollary}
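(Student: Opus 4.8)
The plan is to deduce the estimate from the determinant construction of Section~\ref{bio} together with the classical bound for the density of zeros of Cartwright--class functions. After the Fourier transform we may work in $PW_E$: let $\{k^E_\lambda\}_{\lambda\in\Lambda}$ be the (complete and minimal) system of reproducing kernels corresponding to $\{e_\lambda\}_{\lambda\in\Lambda}$ and let $\{f_\lambda\}_{\lambda\in\Lambda}$ be its biorthogonal system. For $N$ distinct points $\lambda_1,\dots,\lambda_N\in\Lambda$ set $F_k=(z-\lambda_k)f_{\lambda_k}$ and decompose $F_k=\sum_{j=1}^N F_k^j$, where $F_k^j$ has spectrum in $I^j$ (Remark~\ref{div2}); the function to look at is
$$
S(z)=\det\big(F_k^j(z)\big)_{1\le k,j\le N}.
$$

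First I would record the relevant properties of $S$, assuming $S\not\equiv 0$. For every $\mu\in\Lambda$ one has $F_k(\mu)=(\mu-\lambda_k)f_{\lambda_k}(\mu)=0$ for all $k$, so the columns of the matrix $(F_k^j(\mu))_{k,j}$ sum to zero and hence $S(\mu)=0$; thus $\Lambda\subseteq\mathcal Z(S)$. Expanding the determinant, every summand is a product $\prod_k F_k^{\sigma(k)}$ over a permutation $\sigma$, and its conjugate indicator diagram is contained in $\sum_k\diag F_k^{\sigma(k)}\subseteq i\sum_{j=1}^N I^j$, a segment of the imaginary axis of length $\sum_j|I^j|=|E|$; therefore $\diag S$ is contained in such a segment. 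Finally, by the Plancherel--P\'olya inequality each $f_{\lambda_k}^j\in PW_{I^j}$ is bounded on $\R$, so $|F_k^j(x)|\le C(1+|x|)$ and $|S(x)|\le C(1+|x|)^N$ on $\R$; in particular $S$ belongs to the Cartwright class. By the classical theorem on the distribution of zeros of such functions (see \cite{lev, boas}) the density of $\mathcal Z(S)$, and hence $\mathcal D_+(\Lambda)$, does not exceed $\frac{1}{2\pi}$ times the length of $\diag S$, which is $\le\frac{|E|}{2\pi}$. So once $S\not\equiv 0$ is available we are done.

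It remains to guarantee that $\lambda_1,\dots,\lambda_N$ can be chosen with $S\not\equiv 0$ \emph{without} assuming \eqref{dens}. Here I would argue by contradiction: suppose $\mathcal D_+(\Lambda)>\frac{|E|}{2\pi}$ (possibly $=\infty$). Then for every proper $J\subsetneq\{1,\dots,N\}$ we have $\sum_{j\in J}|I^j|<|E|<2\pi\,\mathcal D_+(\Lambda)$, so by the Cartwright bound above \emph{no} nonzero entire function of polynomial growth on $\R$ whose conjugate indicator diagram is a segment of length $\sum_{j\in J}|I^j|$ can vanish on $\Lambda$ up to a finite set. This ``uniqueness over proper subunions'' is precisely the only way the density hypothesis \eqref{dens} is used in the proofs of Lemma~\ref{indep} and Proposition~\ref{struct}; those arguments therefore remain valid and produce $\lambda_1,\dots,\lambda_N$ with $\{F_k\}_{k=1}^N$ linearly independent and $S=\det(F_k^j)\not\equiv 0$. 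By the previous paragraph $\mathcal D_+(\Lambda)\le\frac{|E|}{2\pi}$, a contradiction. Hence $\mathcal D_+(\Lambda)\le\frac{|E|}{2\pi}$ in all cases.

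The main obstacle is exactly this last step: one must verify that \eqref{dens} enters Section~\ref{bio} only through the stated uniqueness property, and that for an arbitrary number of intervals the construction of a nonvanishing $S$ behaves as in Proposition~\ref{struct}. A contradiction--free alternative is to perform the construction directly and treat the degenerate cases in which all relevant minors of $(f_\lambda^j)_{\lambda,j}$ vanish identically; a somewhat delicate but elementary computation then shows that in each such case some $f_\lambda$, or a lower--order minor $\det(f_{\lambda_k}^{j_i})$, has spectrum in a proper subunion $\cup_{j\in J}I^j$ and still vanishes on $\Lambda$ up to a finite set, which yields the even stronger estimate $\mathcal D_+(\Lambda)\le\frac{1}{2\pi}\sum_{j\in J}|I^j|$. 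Either way the conclusion follows.
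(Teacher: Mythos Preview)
Your proof is correct and follows the paper's approach: construct $S=\det(F_k^j)$ as in Proposition~\ref{struct}, observe that $\Lambda\subset\mathcal Z(S)$ and that $\diag S$ lies in a segment of length $|E|$, then apply the Cartwright bound on zero density. Your explicit contradiction argument to bypass the formal dependence of Lemma~\ref{indep} and Proposition~\ref{struct} on \eqref{dens} is a useful clarification---the paper's two-line proof simply invokes Proposition~\ref{struct} and leaves implicit the observation (which you make precise) that the density hypothesis there is used only through uniqueness over proper subunions of the $I^j$.
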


\begin{proof} 
By Proposition \ref{struct}, the function $S(z)=\det(F^j_k(z))_{1\leq k,j\leq N}$ is nonzero
and vanishes on $\Lambda$. Recall that $F^j_k$ has its spectrum in $I^j$ and its conjugate indicator diagram
is contained  in $i I^j$. Hence, the conjugate indicator diagram of $S$ is contained in an interval of length $|E|$
and the estimate follows.
\end{proof}
\bigskip

%%%%%%%%%%%%%%%%%%%%%%%%%%%%%%%%%%%%%%%%%%%%%%

\section{Proof for the case of two intervals}
\label{case2}

In this section we prove completeness of the biorthogonal system for 
the case when $E=I^1\cup I^2 = [0, a] \cup [b, 2\pi]$. We include a separate proof for this case
since it is much simpler and more elementary than the three intervals case. 
In particular, we do not need to use the key lemma from \cite{kn}.

Let $\{k_\lambda^E\}_{\lambda\in\Lambda}$ be a complete and minimal system of reproducing kernels in $PW_E$
and let $\{f_\lambda\}_{\lambda\in\Lambda}$ be its biorthogonal system. 
Without loss of generality we assume that $\Lambda\cap \Z = \emptyset$ (otherwise we can simply shift $\Lambda$
by a real constant). Assume that $h\in PW_E$ is orthogonal to 
$\{f_\lambda\}_{\lambda\in\Lambda}$. 

By Corollary \ref{bas2} there exists a Riesz basis $\{k^E_\gamma\}_{\gamma \in \Gamma}$ in $PW_E$
such that $\Gamma = \{\gamma _n\} \subset \Z$ and its generating function $G$ satisfies
$|G(z)| \asymp  {\rm dist}\,(z, \Gamma)$, $|\ima z| \le 1$. We expand $h$ with respect to $\{k^E_\gamma\}_{\gamma \in \Gamma}$,
$$
h = \sum_n \bar a_n k^E_{\gamma_n}, \qquad \{a_n\}\in \ell^2.
$$
Hence, using the first representation for $f_\lambda$ from \eqref{telle}, we get 
$$
\sum_n a_n  \frac{F_2^1(\lambda) F_1(\gamma_n) - F_1^1(\lambda) F_2(\gamma_n)}{\gamma_n-\lambda} = (f_\lambda, h) =0, \qquad \lambda\in \Lambda.
$$
Put 
$$
L(z) = G(z)\sum_n a_n  \frac{F_2^1(z) F_1(\gamma_n) - F_1^1(z) F_2(\gamma_n)}{z- \gamma_n}.
$$
Since $L$ vanishes on $\Lambda$ we can write $L= e^{itz} S V^1$ for some entire function $V^1$, 
where $S$ and $t$ are defined in Corollary \ref{fs}. 

Recall that the conjugate indicator diagrams of $G$ and $e^{itz} S$ coincide with $[0, i|E|]$. Since the spectra of $F_1^1, F_2^1$ 
are contained in $I^1$, the diagram of $L$ is obviously contained
in the (Minkowski) sum $i I^1 + [0, i|E|]$. Hence, the diagram of $V^1$ is contained in $i I^1$.

We can also use the second representation for $f_\lambda$ from \eqref{telle}. Then we conclude that
$$
G(z)\sum_n a_n  \frac{F_1^2(z) F_2(\gamma_n) - F_2^2(z) F_1(\gamma_n)}{z- \gamma_n} = e^{itz} SV^2
$$
for another entire function $V^2$. Arguing as above we conclude that the diagram of $V^2$ is contained in $i I^2$. 

Now note that $L(\gamma_n) = a_n G'(\gamma_n) S(\gamma_n)$, whence $V^1(\gamma_n) = e^{-it\gamma_n} a_n G'(\gamma_n)$.
Since, by construction, $|G'(\gamma_n)| \asymp 1$, we conclude that $\{V^1(\gamma_n)\} \in\ell^2$. 
Since $\Gamma$ generates a Riesz basis on the interval $[b, 2\pi +a]$ of the length $|I^1| + |I^2| = |E|$
and the width of the diagram of $V^1$ equals $|I^1|$, it follows from a variant 
of the classical Cartwright theorem (see, e.g., \cite[Section 10.5]{boas})
that $V^1\in PW_{I^1}$. 

Using the fact that $ F_1^2(z) F_2(z) - F_2^2(z) F_1(z) = S(z)$, we analogously get 
$V^2(\gamma_n) = e^{-it\gamma_n} a_n G'(\gamma_n)$, whence $V^2\in PW_{I^2}$. Now, consider the function
$$
W(z) = e^{2\pi i z} V^1(z) - V^2(z).
$$
Since $\Gamma\subset \Z$, we have $e^{2\pi i \gamma_n}=1$ and so $W$ vanish on $\Gamma$. 
Also, $W$ belongs to $PW_{[b, 2\pi +a]}$. Since $\Gamma$ is a uniqueness set for 
$PW_{[b, 2\pi +a]}$ (see Remark \ref{compl}) 
we conclude that $W\equiv 0$. Since the functions $e^{2\pi i z} V^1$ and $V^2$ have disjoint spectra, 
we have $V^1=V^2\equiv 0$ and, finally, $a_n\equiv0$ and $h=0$.
\qed
\bigskip

%%%%%%%%%%%%%%%%%%%%%%%%%%%%%%%%%%%%%%%%%%%%%%%

\section{Proof for the case of three intervals}
\label{case3}

Let $E$ be a union of three disjoint intervals,  $E=\cup_{j=1}^3 I^j$. Without loss of generality 
$$
E=[0,a]\cup[b,c]\cup[d,2\pi].
$$

We can assume that $\Lambda\cap\mathbb{Z}=\emptyset$ 
(otherwise consider the sequence $\Lambda+\delta$, $\delta\in\mathbb{R}$).
By Lemma \ref{basis} there exists a Riesz basis $\{e_\gamma\}_{\gamma\in\Gamma}$ 
such that $\Gamma\subset \Z$ and there exists an entire function $G$ with conjugate indicator diagram $[0, i|E|]$ and 
with simple zeros exactly at $\Gamma$,
$$
G(z)=e^{isz}\, {\rm v.p}\,\prod_{\gamma\in\Gamma}\biggl{(}1-\frac{z}{\gamma}\biggr{)},
$$
and $|G'(\gamma)|\asymp 1$, $\gamma\in\Gamma$.
\medskip

As usual, we consider the equivalent problem about the system $\{k^E_\lambda\}_{\lambda\in\Lambda}$ 
of reproducing kernels in $PW_E$. Denote by $\{f_\lambda\}_{\lambda\in\Lambda}$ the system biorthogonal 
to  $\{k^E_\lambda\}_{\lambda\in\Lambda}$. 
Assume that $\{f_\lambda\}_{\lambda\in\Lambda}$ is not complete. Then there exists a non-trivial 
$h\in PW_E$ which is orthogonal to 
$\{f_\lambda\}_{\lambda\in\Lambda}$. Consider the expansion of $h$ with respect to the Riesz basis 
$\{k^E_\gamma\}_{\gamma\in\Gamma}$,
$$
h(z)=\sum_n \bar a_n k^E_{\gamma_n}(z),\qquad \{a_n\}\in\ell^2.
$$

Let $\lambda_1, \lambda_2, \lambda_3\in\Lambda$ 
be as in Proposition \ref{struct}. Put $F_k(z) = (z-\lambda_k) f_{\lambda_k}(z)$, 
$k=1,2,3$, and write $F_k = F_k^1 + F_k^2+F_k^3$, 
where $F_k^j$ has its spectrum in $I^j$. 
By Proposition \ref{struct}, for any $l=1,2,3$ and $a_k^l=(-1)^{k+l} \det(F_i^j(\lambda))_{i \neq k, j\neq l}$
one has
$$
\frac{a_1^l F_1(z)+ a_2^l F_2(z)+ a_3^l F_3(z)}{z-\lambda} = c f_{\lambda}(z)
$$
for some nonzero constant $c = c_{\lambda, l}$. We use the notation
introduced in the proof of Proposition \ref{struct}:
for $\{1,2,3\} \setminus \{k\} = \{i, j\}$, $\{1,2,3\} \setminus \{l\} = \{m, n\}$, where $i<j$, $m<n$,
we put 
$$
F_{ij}^{mn} (z) = (-1)^{k+l} \big( F_i^m(z) F_j^n(z) - F_i^n(z) F_j^m(z) \big).
$$

Let us first take $l=1$. Then, with the above notation, we have
$$
\frac{F^{23}_{23}(\lambda)F_1(z)+ F^{23}_{13}(\lambda)F_2(z)+F_{12}^{23}(\lambda)F_3(z)}{z-\lambda}
= c_\lambda f_{\lambda}(z).
$$
Hence, since $h$ is orthogonal to $f_\lambda$,
$$
0=(c_\lambda f_\lambda, h)=\sum_na_n\frac{F^{23}_{23}(\lambda)F_1(\gamma_n)+
F^{23}_{13}(\lambda)F_2(\gamma_n)+F_{12}^{23}(\lambda)F_3(\gamma_n)}{\gamma_n-\lambda}.
$$
Consider the entire function 
\begin{equation}
\label{Leq}
L(z)=G(z)\sum_n a_n\frac{F^{23}_{23}(z) F_1(\gamma_n)+
F^{23}_{13}(z)F_2(\gamma_n)+F_{12}^{23}(z)F_3(\gamma_n)}{z-\gamma_n}.
\end{equation}
Then $L$ vanish on $\Lambda$. 

Recall that the function $S(z)=\det(F^j_k(z))_{1\leq k,j\leq 3}$ from Proposition \ref{struct} 
vanish exactly on $\Lambda$. Since the spectra of the functions $F^j_k$ are contained in $I^j$, we have $S\in 
\mathcal{P}\cdot PW_I$, $I = I^1+I^2+I^3$, thus, the spectrum of $S$ (in the distributional sense) 
is contained in $I$. We can choose $t\in \R$ such that $e^{itz} S$ has the spectrum $[0, |E|]$.
Now we can write
\begin{equation}
\label{Leq2}
L(z)=e^{itz} S(z)V^{23}(z)
\end{equation}
for some entire function $V^{23}$. 

Note that, by trivial linear algebra, 
$$
F^{23}_{23}(z)F_1(z)+ F^{23}_{13}(z)F_2(z)+F_{12}^{23}(z)F_3(z) = 
\det(F^j_k(z))_{1\leq k,j\leq 3} = S(z).
$$
Now, comparing the values of the left-hand and right-hand parts in \eqref{Leq} 
at $\gamma_n$ and using the fact that $S(\gamma_n) \ne 0$, we get
$$
V^{23}(\gamma_n)=e^{-it\gamma_n} a_n G'(\gamma_n).
$$
Note that the conjugate indicator diagrams of the functions $F^{23}_{23},F^{23}_{13},F^{23}_{12}$ 
are contained in $i(I^2+I^3)$, 
while $\frac{G(z)}{z-\gamma_n}\in PW_{[0, |E|]}$. Hence, the conjugate diagram of 
$L$ is contained in $i(I^2+I^3+[0, |E|])$. Since the diagram of $e^{itz} S$ 
equals $i[0, |E|]$, we conclude that
$$
\diag V^{23}\subset i(I^2+I^3)
$$
(recall that we denote by $\diag f$ the conjugate indicator diagram of an entire function $f$ of exponential type).
Since $V^{23}\bigl{|}_\Gamma \in\ell^2(\Gamma)$ and $\Gamma$ has density $\frac{|E|}{2\pi}>\frac{|I^2+I^3|}{2\pi}$,
we conclude that $V^{23}\in PW_{I^2+I^3}$ by the classical Cartwright theorem \cite[Section 10.5]{boas}.

If we consider the coefficients $a_k^l$ with $l=2$ or $l=3$, we get two 
more alternative representations for functions $c_\lambda f_\lambda$ (with different constants $c_\lambda$).
E.g., if we start with $a_1^2 = F^{13}_{23}(\lambda)$, $a_2^2 = F^{13}_{13}(\lambda)$, $a_3^2 = F^{13}_{12}(\lambda)$,
then we obtain an entire function $V^{13} \in PW_{I^1+I^3}$. Similarly, starting from $\{a_k^3\}$ we get
$V^{12}\in PW_{I^1+I^2}$. All these three functions coincide at $\Gamma$:
$$
V^{23}(\gamma_n)=V^{13}(\gamma_n)=V^{12}(\gamma_n)= e^{-it\gamma_n}a_nG'(\gamma_n).
$$ 

Note that 
$$
I^2+I^3=[b+d,c+2\pi], \qquad I^1+I^2=[b,a+c], \qquad I^1+I^3=[d,a+2\pi].
$$
Consider the function $e^{2\pi i z}  V^{12}(z) -V^{23}(z)$, which vanish on $\Gamma$.
Its conjugate diagram is contained in the interval $i [b+d,a+c+2\pi]$, whose length equals $|E|$,
and so $$e^{2\pi i z}  V^{12}(z) -V^{23}(z) \in PW_{ [b+d,a+c+2\pi]}.$$ 
Since $\Gamma$ is a uniqueness set for $PW_I$ for any interval $I$ with $|I| = |E|$ (see Remark \ref{compl}), 
we conclude that $ e^{2 \pi i z} V^{12}(z) \equiv V^{23}(z)$. Since
$$
\diag V^{23}\subset i [b+d,c+2\pi], \qquad \diag e^{2\pi i z} V^{12} \subset i [b+2\pi,a+c+2\pi],
$$
it follows that $\diag V^{23}\subset i [b+2\pi,c+2\pi]$, $\diag V^{12}\subset i [b,c]$. 

Next, consider the function $V^{13}(z)-V^{12}(z)$
which also vanish on $\Gamma$. Its spectrum is contained in 
$\tilde E=[b,c]\cup[d, a+2\pi]$, and so  $V^{13}-V^{12} \in PW_{\tilde E}$. 
Consider $\{e^{i\gamma t}\}_{\gamma\in\Gamma}$ 
as a system in $L^2(\tilde{E})$. Since $\Gamma\subset{\mathbb{Z}}$ and 
$\{e^{i\gamma t}\}_{\gamma\in\Gamma}$ is a Riesz basis in $L^2(E)$, 
it is also a Riesz basis in $L^2(\tilde{E})$ (by Remark \ref{glue}, moving a part of a set by $2\pi$ does not change 
the geometry of exponentials with integer frequencies). 
Therefore, $\{k^{\tilde E}_\gamma\}_{\gamma\in \Gamma}$ is a Riesz basis in $PW_{\tilde E}$.
We conclude that 
$V^{12}(z)\equiv V^{13}(z)$. Since, $\diag V^{12}\cap\diag V^{13}=\emptyset$ 
we conclude that $V^{12}=V^{13}=V^{23}=0$. Hence, $a_n\equiv0$ and, thus, $h=0$.
\qed

\end{document}